\DeclareSymbolFont{SY}{U}{psy}{m}{n}
\DeclareMathSymbol{\emptyset}{\mathord}{SY}{'306}
\newcommand{\dist}{\operatorname{dist}}
\newcommand{\lal}{{\langle}}
\newcommand{\ral}{{\rangle}}
\newcommand{\spec}{\operatorname{spec}}
\newcommand{\bbC}{{\mathbb C}}
\newcommand{\bbR}{{\mathbb R}}
\newcommand{\cB}{{\mathcal B}}
\newcommand{\cD}{{\mathcal D}}
\newcommand{\cG}{{\mathcal G}}
\newcommand{\cO}{{\mathcal O}}
\newcommand{\cV}{{\mathcal V}}
\newcommand{\fb}{\mathfrak{b}}
\newcommand{\fc}{\mathfrak{c}}
\newcommand{\fH}{\mathfrak{H}}
\newcommand{\fh}{\mathfrak{h}}
\newcommand{\fL}{\mathfrak{L}}
\newcommand{\fM}{\mathfrak{M}}
\newcommand{\fs}{\mathfrak{s}}
\newcommand{\fx}{\mathfrak{x}}
\newcommand{\fy}{\mathfrak{y}}
\newcommand{\sE}{\mathsf{E}}
\numberwithin{equation}{section}
\newtheorem{theorem}{Theorem}[section]
\newtheorem{corollary}[theorem]{Corollary}
\newtheorem{lemma}[theorem]{Lemma}
\newtheorem{hypothesis}[theorem]{Hypothesis}
\theoremstyle{definition}
\newtheorem{definition}[theorem]{Definition}
\theoremstyle{remark}
{\it}{\rm}
\newtheorem{remark}[theorem]{Remark}
\newtheorem{example}[theorem]{Example}
\begin{document}

\title[Solvability of the operator Riccati equation in the Feshbach case]
{Solvability of the operator Riccati equation in the Feshbach case}%

\author[S. Albeverio and A. K. Motovilov]
{Sergio Albeverio and  Alexander K. Motovilov}

\address{\hspace*{-1.05cm} Sergio Albeverio \newline
Institut f\"ur Angewandte Mathematik and HCM\newline
Universit\"at Bonn \newline
Endenicher Allee 60, D-53115 Bonn \newline
Germany \newline
\textit{E-mail address: \emph{albeverio@iam.uni-bonn.de}}}

\address{\hspace*{-1.05cm} Alexander K. Motovilov \newline
Bogoliubov Laboratory of Theoretical Physics, JINR  \newline
Joliot-Cu\-rie 6, 141980 Dubna, Moscow Region \newline
Russia\newline
\textit{\emph{and}} \newline
Faculty of Natural and Engineering Sciences \newline
Dubna State University \newline
Universitetskaya 19, 141980 Dubna, Moscow Region \newline
Russia \newline
\textit{E-mail address: \emph{motovilv@theor.jinr.ru}}}

\date{December 14, 2017}

\subjclass[2010]{47A15, 47A62, 47B15}

\keywords{Operator Riccati equation, Feshbach case, Friedrich model,
graph subspace, resonance, unphysical sheet}.

\begin{abstract}
We consider a bounded block operator matrix of the form
$$
L=\left(\begin{array}{cc}
A & B \\
C & D
\end{array}
\right),
$$
where the main-diagonal entries $A$ and $D$ are self-adjoint
operators on Hilbert spaces $\fH_{_A}$ and $\fH_{_D}$, respectively;
the coupling $B$ maps $\fH_{_D}$ to $\fH_{_A}$ and $C$ is an
operator from $\fH_{_A}$ to $\fH_{_D}$. It is assumed that the
spectrum $\sigma_{_D}$ of $D$ is absolutely continuous and uniform,
being presented by a single band $[\alpha,\beta]\subset\bbR$,
$\alpha<\beta$, and the spectrum $\sigma_{_A}$ of $A$ is embedded
into $\sigma_{_D}$, that is, $\sigma_{_A}\subset(\alpha,\beta)$. In
its spectral representation, the entry $D$ reads as the operator of
multiplication by the independent variable $\lambda\in(a,b)$. One
more assumption is that both the couplings $B$ and $C$ are defined
via operator-valued functions of $\lambda\in(\alpha,\beta)$ that are
real analytic on $(\alpha,\beta)$ and admit analytic continuation
onto some domain in $\bbC$. This allows one to perform a complex
deformation of $L$. The latter involves, in particular, the
replacement of the original entry $D$ with the operators of
multiplication by the complex variable $\lambda$ running through
piecewise smooth Jordan contours obtained from the interval
$(\alpha,\beta)$ by continuous transformations. We formulate
conditions under which there are bounded solutions to the operator
Riccati equations associated with the complexly deformed block
operator matrix $L$; in such a case the deformed operator matrix $L$
admits a block diagonalization. The same conditions also ensure the
Markus-Matsaev-type factorization of the Schur complement
$M_{_A}(z)=A-z-B(D-z)^{-1}C$\, analytically continued onto the
unphysical sheet(s) of the complex $z$ plane adjacent to the band
$[\alpha,\beta]$. We prove that the operator roots of the continued
Schur complement $M_{_A}$ are explicitly expressed through the
respective solutions to the deformed Riccati equations.

\end{abstract}

\maketitle

\section{Introduction}
\label{SIntro}

Assume that $L$ is a bounded linear operator on a Hilbert space $\fH$.
Suppose $\fH$ is decomposed into the orthogonal sum
\begin{equation}
\label{fH}
{\fH=\fH_{_A}\oplus\fH_{_D}}
\end{equation}
of two subspaces $\fH_{_A}$ and $\fH_{_D}$. Then, with respect to
the decomposition \eqref{fH}, the operator $L$ reads as $2\times2$
block matrix,
\begin{equation} \label{L} L=\left(\begin{array}{cc}
A & B \\
C & D
\end{array}\right),
\end{equation}
where the main-diagonal entries $A$ and $D$ are operators
respectively on $\fH_{_A}$ and $\fH_{_D}$; the coupling $B$ maps
$\fH_{_D}$ to $\fH_{_A}$ and $C$ is an operator from $\fH_A$ to
$\fH_D$. The relations
\begin{align}
\label{RicA}
{XA-DX+XBX}&{=C}, \qquad X:\,\fH_{_A}\to\fH_{_D},\\
\label{RicD}
{YD-AY+YCY}&{=B}, \qquad Y:\,\fH_{_D}\to\fH_{_A},
\end{align}
are called the (pair of dual) operator Riccati equations associated
with the block operator matrix $L$.

It is well known (see, e.g., \cite{AMM}, \cite{MSS}) that a bounded
operator $X$ from $\fH_{_A}$ to $\fH_{_D}$ is a solution to the Riccati equation
\eqref{RicA} if and only if the graph $\cG(X)$ of $X$,
\begin{equation}
\label{cGX}
 \cG(X):=\left\{x\oplus Xx\,\,|\,\, x\in\fH_{_A}\right\},
\end{equation}
is an invariant subspace of $L$. Similarly, a bounded operator $Y:\,\fH_{_D}\to\fH_{_A}$
is a solution to the Riccati equation \eqref{RicD}
if and only if the graph subspace
\begin{equation}
\label{cGY}
\cG(Y)=\left\{Yy\oplus y\,\,|\,\, y\in\fH_{_D}\right\}
\end{equation}
is invariant under $L$. Thus, the problem of existence and
uniqueness of solutions to the Riccati equations turns out to be an
important issue in various sections of mathematics and physics
involving the study of invariant subspaces of a linear operator.
Among them one may place the long-standing problem of obtaining
optimal bounds on variation of a spectral subspace of a self-adjoint
operator under an additive perturbation that still has only partial
solutions (see, e.g., the articles, in chronological order,
\cite{KMM02,AM-CAOT,SeelMain,Seel2016}, and references therein). It
is the possibility to construct reducing subspaces of a
quantum-mechanical Hamiltonian in terms of solutions to operator
Riccati equations that lies behind the celebrated Okubo \cite{Okubo}
and Foldy-Wouthoysen \cite{FW} transforms. Operator Riccati
equations and invariant graph subspaces are also closely related to
the factorization problem \cite{MarkusMatsaev75} for operator
pencils with resolvent-like dependence on the spectral parameter
(see \cite{AL,AdLT,ALMSr,LMMT,MS}).

Most of the known results on the solvability of the operator
Riccati equations \eqref{RicA}\,/\,\eqref{RicD} concern the case where
the spectra $\sigma_{_A}$ and $\sigma_{_D}$ of the main-diagonal entries
are disjoint, that is,
\begin{equation}
\label{dIntro}
d:=\dist\bigl(\sigma_{_A},\sigma_{_D}\bigr)>0,
\end{equation}
and the corresponding  block operator matrix $L$ is self-adjoint.
The total list of works touching the problem of the existence of
solutions to \eqref{RicA}\,/\,\eqref{RicD} associated with a
self-adjoint $L$ is rather extensive and here we mention only a very
few of the related publications:
\cite{AdLT,ALMSr,AMM,AM-CAOT,KMM02,MSS,MS,MotRem,Seel2016}. In the
case of a self-adjoint $L$, for certain mutual positions of the
(disjoint) spectral sets $\sigma_{_A}$ and $\sigma_{_D}$, even some
sharp conditions on $B$ (and $C=B^*$) ensuring the solvability of
\eqref{RicA}\,/\,\eqref{RicD} are available. These particular
spectral situations correspond to the mutual positions where one of
the of the spectral sets $\sigma_{_A}$ and $\sigma_{_D}$ is
completely embedded into a finite or infinite spectral gap of the
other set (see \cite{KMM03,KMM04}). The optimal solvability
conditions are accompanied by sharp norm bounds on the solution $X$
that follow from the relevant estimates in the subspace perturbation
problem known as the Davis-Kahan $\tan2\Theta$ theorem \cite{DK70}
and the \emph{a priori} $\tan\Theta$ theorem \cite{AM-TanT,MS01}.
Best available sufficient condition for the existence of a bounded
solution $X$ to \eqref{RicA} and best (but still not optimal) norm
estimate on $X$ under the single spectral assumption \eqref{dIntro}
follow from the main result of \cite{Seel2016} (cf.
\cite{NTSE-2014}). A number of the existence results for
\eqref{RicA} and estimates on the solution $X$ under the condition
\eqref{dIntro} in the case of a $J$-self-adjoint block operator
matrix $L$ may be found in \cite{AMSh,AMT} (also see
\cite{Veselic1,Veselic2}). Furthermore, we refer to \cite{AM2011}
concerning the existence results for \eqref{RicA} with disjoint
$\sigma_{_A}$ and $\sigma_{_D}$ in the case where one of the entries
$A$ and $D$ is a normal operator. Finally, in the generic
non-self-adjoint case, an existence result for the Riccati equation
\eqref{RicA} under condition \eqref{dIntro} has been obtained in
\cite{LMMT}, based the concept of quadratic numerical range.

In paper \cite{MM99} that treats the case of self-adjoint $L$, the
assumption \eqref{dIntro} is replaced by the hypothesis that the
spectrum of one of the main-diagonal entries $A$ and $D$ is at least
partly embedded into the absolutely continuous spectrum of the other
one, say
\begin{equation}
\label{MMincl}
\sigma_{_A}\cap\sigma_{_D}^{\rm ac}\neq\emptyset,
\end{equation}
where $\sigma_{_D}^{\rm ac}$ denotes the absolutely continuous
spectrum of $D$. Following the quantum-mechanical terminology, we
call the spectral disposition \eqref{MMincl} the \emph{Feshbach
case} since for infinitesimally small $B\neq 0$ (and $C=B^*$) the
eigenvalues of $A$ embedded into $\sigma_{_D}^{\rm ac}$ generically
transform into Feshbach resonances \cite{Feshbach}.

Conditions on the entry $B$ (and, hence, on the
entry $C=B^*$) in \cite{MM99} are chosen such that the Schur
complement
\begin{equation}
\label{MA}
{M_{_A}(z)=A-z-B(D-z)^{-1}C},\quad z\in\bbC\setminus\sigma_{_D},
\end{equation}
considered as an operator-valued function of $z$, admits analytic
continuation through bands of $\sigma_{_D}^{\rm ac}$
onto certain adjacent domains lying already on unphysical sheets of
the complex plane. It was found in \cite{MM99} that the continued
Schur complement \eqref{MA} admits a factorization of the
Markus-Matsaev type \cite{MarkusMatsaev75} and, thus, it possesses a
family of operator roots. The spectrum of an operator root of
$M_{_A}$, along with a part of the usual spectrum of $L$, may
possibly include a number of resonances of $L$. In \cite{HMM-JOT}
the results of \cite{MM99} were generalized to some unbounded
self-adjoint $L$ with unbounded $B$ and in \cite{HMM} even to some
unbounded non-self-adjoint $L$. Recently, in \cite{AM2016-1}, the
factorization approach of \cite{MM99} allowed us to prove the
existence of bounded solutions to the operator Riccati equation
\eqref{RicA} associated with a $J$-self-adjoint block matrix $L$ of
the form \eqref{L} in the case where
$\sigma_{_A}\subset\sigma_{_D}^{\rm ac}$.

In the present work we consider the case where the entries $A$ and
$D$ are self-adjoint, with $D$ being given in the spectral
representation. Thus, finally we even adopt the hypothesis that $D$
is simply the operator of multiplication by an independent variable.
Moreover, in order to ensure the maximal clarity, we then restrict
ourselves to the case where all the spectrum of $D$ is absolutely
continuous and uniform, being presented by a single band, that is,
$\sigma_{_D}=\sigma_{_D}^{\rm ac}=[\alpha,\beta]$,
$-\infty<\alpha<\beta<\infty$, and
$\sigma_{_A}\subset(\alpha,\beta)$. Therefore, the operator $L$ we
study, is in fact nothing but an extension of one of the two
celebrated Friedrichs models in \cite{Fried}, namely the $2\times2$
operator matrix model discussed in \cite[Section 6]{Fried}.
Furthermore, we assume that the entries $B$ and $C$ are defined via
operator-valued functions $\fb(\lambda)$ and $\fc(\lambda)$ of
$\lambda\in(\alpha,\beta)$ that are both real analytic and admit
analytic continuation onto some domain $\cD\subset\bbC$ (see
Section~\ref{Sscal} for details). This allows one to perform a
\emph{complex deformation} of $L$. The latter involves, in
particular, the replacement of the original entry $D$ with the
operators $D_{_\Gamma}$ of multiplication by the complex variable
$\lambda$ running through piecewise smooth Jordan contours $\Gamma$
obtained from the interval $(\alpha,\beta)$ by a continuous
transformation. It is assumed that during such a transformation the
end points $\alpha$ and $\beta$ are fixed and
$\Gamma\setminus\{\alpha,\beta\}\subset\cD$. For the complexly
deformed operators $B$ and $C$ we use the respective notations
$B_{_\Gamma}$ and $C_{_\Gamma}$. Notice that, in case of momentum
space few-body Hamiltonians, the approach we apply to $L$ is well
known under the name of \emph{contour deformation method} (see,
e.g., \cite{HVHJ} and references therein). One of variants of this
method that reduces the deformation of the absolutely continuous
spectrum just to its rotation in $\bbC$ about the threshold points
is the celebrated \emph{complex scaling}, used both in momentum and
coordinate representations (see \cite{BCombes,Lovelace,RS-III}).

The complex deformation of $L$ leads to the \emph{complexly
deformed} associated Riccati equations \eqref{RicA}\,/\,\eqref{RicD}
with the same entry $A$ but with $B$, $C$, and $D$ replaced by the
corresponding complexly deformed $B_{_\Gamma}$, $C_{_\Gamma}$, and
$D_{_\Gamma}$. The complexly deformed main-diagonal entry
$D_{_\Gamma}$ is a normal operator whose spectrum
$\sigma_{_{D_\Gamma}}=\overline{\Gamma}$ may be made disjoint with
$\sigma_{_A}$ by a relevant choice of the contour $\Gamma$. Then one
simply applies to the deformed Riccati equations the approach of
\cite{AM2011} that works under the assumption of spectral
disjointness \eqref{dIntro} and that we already mentioned above. In
particular, we prove that the operator roots of the continued Schur
complement \eqref{MA} are explicitly expressed through the solutions
$X_{_\Gamma}$ to the complexly deformed Riccati equation
\eqref{RicA}.

The article is organized as follows. In Section \ref{SPrel} we
collect the necessary information on the existence and properties of
solutions to the Riccati equations \eqref{RicA}\,/\,\eqref{RicD}
with special attention to the case where at least one of the entries $A$ and
$D$ is a normal operator.  In Section \ref{sFactor}, we present a
version of results of \cite{MM99} adapted to the case
$\sigma_{_A}\subset\sigma_{_D}^{\rm ac}$ under consideration.
However, unlike in \cite{MM99}, we do not require that $C=B^*$.
Among other things, the section contains conditions ensuring the
existence of operator roots for the analytically continued Shur
complement \eqref{MA}. Finally, in Section \ref{Sscal} we introduce
an extension of the Friedrichs' $2\times2$ operator matrix model
from \cite[Section 6]{Fried} and consider its variant admitting a
complex deformation. Assuming the existence of a piecewise smooth
Jordan contour $\Gamma$ such that
$\Gamma\setminus\{\alpha,\beta\}\subset\cD\cap\bbC^\pm$ and the
norms\footnote{See Definition \ref{DefEN} below for the norm of a
bounded operator with respect to the spectral measure of a normal
operator.} $\|B_{_{\Gamma}}\|_{\sE_{_{D_\Gamma}}}$ and
$\|C_{_{\Gamma}}\|_{\sE_{_{D_\Gamma}}}$ of the deformed entries
$B_{_\Gamma}$ and $C_{_\Gamma}$ with respect to the spectral measure
of the normal operator $D_{_\Gamma}$ satisfy the condition
\begin{equation}
\label{BCED2dIn} \sqrt{\|B_{_{\Gamma}}\|_{\sE_{_{D_{\Gamma}}}}
\|C_{_{\Gamma}}\|_{\sE_{_{D_\Gamma}}}}<\frac{1}{2}
\dist(\sigma_{_A},\Gamma),
\end{equation}
we prove the existence of bounded solutions $X_{_\Gamma}$ and
$Y_{_\Gamma}$ to the complexly deformed Riccati equations
\eqref{RicA} and \eqref{RicD} (see Theorem \ref{ThFin}). The
operator roots of the analytically continued Schur complement
\eqref{MA} are nothing but the operators
$Z_{_A}=A+B_{_{\Gamma}}X_{_{\Gamma}}$. Under \eqref{BCED2dIn} these
operators depend only on the sign $\fs=\pm$ in the half-plane
$\bbC^\fs$ superscript but not on the (form of the) contour
$\Gamma\subset\cD\cap(\bbC^\fs\cup\bbR)$ itself. The solutions
$X_{_\Gamma}$ and $Y_{_\Gamma}$ possess the property $\|X_{_\Gamma}
Y_{_\Gamma}\|<1$ which guarantees the block diagonalizability of the
complexly deformed operator matrix $L_{_\Gamma}$ (see Corollary
\ref{CorDiag}).

The following notations are used thro\-ug\-h\-o\-ut the paper. By a
subspace of a Hilbert space we always understand a closed linear
subset. The identity operator is denoted by $I$. The Banach space of
bounded linear operators from a Hilbert space $\fL$ to a Hilbert
space $\fM$ is denoted by $\cB(\fL,\fM)$ and by $\cB(\fL)$ if
$\fL=\fM$. By $\sigma_{_S}$ we denote the spectrum of an operator
$S\in\cB(\fM)$. The notation $\sE_T(\delta)$ is used for the
spectral projection of a normal operator $T$ associated with a Borel
set $\delta\subset\bbC$. In the particular case where $T$ is
self-adjoint, $\delta\subset\bbR$. By $\overline{\delta}$ we denote
the closure of an arbitrary $\delta\subset\bbC$.  By
$\cO_r(\delta)$, $r>0$, we denote the open $r$-neigh\-bourhood of
$\delta$ in $\bbC$, i.e.\,
$\cO_r(\delta)=\{z\in\bbC\big|\,\dist(z,\delta)< r\}$. By $\bbC^+$
and $\bbC^-$ we understand respectively the upper and lower
half-planes of the complex plane $\bbC$ (with excluded real axis),
that is, $\bbC^\pm=\{z\in\bbC\,|\,\pm\mathop{\rm Im} z>0\}$.
\vspace*{2mm}

\noindent {\bf Acknowledgements.} A.K.M is indebted to the Institut
f\"ur Angewandte Mathematik for kind hospitality during his stays at
the Universit\"at Bonn in 2016 and 2017. This work was supported by
the Heisenberg-Landau Program, the Deutsche
For\-sch\-ungs\-gemeinschaft (DFG), and the Russian Foundation for
Basic Research.

\section{Preliminaries}
\label{SPrel}

Assume that the bounded operators $X\in\cB(\fH_{_A},\fH_{_D})$ and
$Y\in\cB(\fH_{_D},\fH_{_A})$ are solutions to the operator Riccati
equations \eqref{RicA} and \eqref{RicD}, respectively. The operators
\begin{align}
\label{Za}
Z_{_A}&=A+BX,\\
\label{Zd}
Z_{_D}&=D+CY,
\end{align}
and
\begin{align}
\label{tZa}
\widetilde{Z}_{_A}&=A-YC,\\
\label{tZd}
\widetilde{Z}_{_D}&=D-XB,
\end{align}
play an outstanding role in the spectral theory of the block
operator matrices of the form \eqref{L} and related operator
pencils. This concerns, in particular, the Schur complements $M_{_A}$
and $M_{_D}$ corresponding to the matrix $L$, $M_{_A}(z)$ is given by
\eqref{MA} and
\begin{equation}
\label{MD}
M_{_D}(z)=D-z-C(A-z)^{-1}B,\quad z\in\bbC\setminus\sigma_{_A}.
\end{equation}
One easily verifies by inspection that the following identities
hold:
\begin{equation}
\label{roots} {M_{_A}(z)=W_{_A}(z){(Z_{_A}-z)}},\quad
z\in\bbC\setminus\sigma_{_D},\quad\text{ and }\quad
{M_{_D}(z)=W_{_D}(z){(Z_{_D}-z)}},\quad
z\in\bbC\setminus\sigma_{_A},
\end{equation}
where $Z_{_A}$ and $Z_{_D}$ are the operators \eqref{Za} and
\eqref{Zd}, respectively; the entries $W_{_A}$ and $W_{_D}$
are explicitly given by
\begin{equation}
\label{WAD}
W_{_A}(z)=I-B(D-z)^{-1}X\qquad\text{ and }\qquad W_{_D}(z)=I-C(A-z)^{-1}Y.
\end{equation}
Similarly,
\begin{align}
\label{troots}
M_{_A}(z)&=(\widetilde{Z}_{_A}-z)\widetilde{W}_{_A}(z),\quad
z\in\bbC\setminus\sigma_{_D},\quad \text{and}\quad
M_{_D}(z)=(\widetilde{Z}_{_D}-z)\widetilde{W}_{_D}(z),\quad
z\in\bbC\setminus\sigma_{_A}
\end{align}
where $\widetilde{Z}_{_A}$ and $\widetilde{Z}_{_A}$ are defined by \eqref{tZa} and \eqref{tZd}, and
\begin{equation}
\label{tWAD}
\widetilde{W}_{_A}(z)=I+Y(D-z)^{-1}C, \quad
\widetilde{W}_{_D}(z)=I+X(A-z)^{-1}B.
\end{equation}

If it so happened that
\begin{equation}
\label{XYn1}
1\not\in\spec(XY)\quad \text{and, equivalently,}\quad 1\not\in\spec(YX),
\end{equation}
the off-diagonal block operator matrix
\begin{equation}
Q=\left(\begin{array}{cc}
0 & Y \\
X & 0
\end{array}\right)
\end{equation}
composed of the solutions $X$  and $Y$ allows one to perform
similarity transformations of the operator $L$ into block diagonal
operator matrices formed either of the operators \eqref{Za}, \eqref{Zd} or
operators \eqref{tZa}, \eqref{tZd}. Namely,
\begin{equation}
\label{Ldiag}
L=(I+Q)\left(\begin{array}{cc}
Z_{_A} & 0 \\
0 & Z_{_D}
\end{array}\right)(I+Q)^{-1}=(I-Q)^{-1}\left(\begin{array}{cc}
\widetilde{Z}_{_A} & 0 \\
0 & \widetilde{Z}_{_D}
\end{array}\right)(I-Q),
\end{equation}

Under condition \eqref{XYn1}, from \eqref{Ldiag} it follows that the operators $Z_{_A}$
and $\widetilde{Z}_{_A}$ as well as the operators $Z_{_D}$
and $\widetilde{Z}_{_D}$ are pairwise similar to each other. More precisely,
\begin{align}
\label{ZZa}
\widetilde{Z}_{_A}&=(I-YX)Z_{_A}(I-YX)^{-1},\\
\label{ZZd}
\widetilde{Z}_{_D}&=(I-XY)Z_{_D}(I-XY)^{-1}.
\end{align}

If the operator $D$ is normal and the spectra of $D$ and $Z_{_A}$
are disjoint, the solution $X$ admits the following integral
representation (see \cite{AM2011} for the proof and definition of
the integral over the spectral measure involved):
\begin{equation}
\label{XZ}
{X=\int_{\sigma_{_D}} \sE_{_D}(d\mu)C({Z_{_A}}-\mu)^{-1}},
\end{equation}
where $\sE_{_D}$ is the spectral measure of $D$.
This representation, written in the form
\begin{equation}
\label{Xeq}
X=\int_{\sigma_{_D}} \sE_{_D}(d\mu)C(A+BX-\mu)^{-1},
\end{equation}
may be treated as one more equation for determining~$X$.
Similarly, the disjointness of the spectra of $D$ and $\widetilde{Z}_{_A}=A-YC$
yields an ``integral equation'' for $Y$,
\begin{equation}
\label{YZ}
{Y=-\int_{\sigma_{_D}} (A-YC-\mu)^{-1}}B \sE_{_D}(d\mu).
\end{equation}
Notice that \eqref{XZ} allows one to rewrite the function $W_{_A}(z)$ from \eqref{WAD}
in the form
\begin{equation}
\label{WADx}
W_{_A}(z)=I-\int_{\sigma_{_D}} B\sE(d\mu)C\frac{1}{\mu-z}(Z_{_A}-\mu)^{-1}.
\end{equation}

The paper \cite{MM99} introduced the concept of the norm of a
bounded operator with respect to the spectral measure of a given self-adjoint
operator. In \cite{AM2011} this concept was extended to the spectral
measure associated with a given normal operator. The operator norm with
respect to a spectral measure proved to be a useful tool in the
study of the operator Sylvester and Riccati equations (see
\cite{AMM} and \cite{AM2011} for details). We recall this concept
bearing in mind its application to equations \eqref{Xeq} and
\eqref{YZ}.

\begin{definition}
\label{DefEN}
\label{ENorm} Let $S\in\cB(\fH_{_{A}},\fH_{_D})$ be a bounded
operator between the Hilbert spaces $\fH_{_{A}}$ and $\fH_{_{D}}$, and
let the operator $D\in\cB(\fH_{_D})$ be normal.
Introduce the quantity
\begin{equation}
\label{enorma}
\|S\|_{\sE_D}=\left(\sup\limits_{\{\delta_j\}}
\sum_j \|S^*\sE_{_D}(\delta_j)S\|\right)^{1/2},
\end{equation}
where the supremum is taken over finite (or countable) systems of
mutually disjoint Borel subsets $\delta_j$ of the spectrum
$\sigma_{_D}$ of the normal operator $D$, $\delta_j\cap\delta_k=\emptyset$, if $j\neq k$.  The
number $\|S\|_{E_D}$ is called the norm of $S$ with
respect to the spectral measure $d\sE_{_D}(z)$ or simply $\sE_{_D}$-norm of $S$.
For $T\in\cB(\fH_{_{D}},\fH_{_A})$ the $E$-norm $\|T\|_{\sE_D}$ is defined by
$\|T\|_{\sE_D}:=\|T^*\|_{\sE_D}$.
\end{definition}
\begin{remark}
Clearly, Definition \ref{DefEN} implies
\begin{equation}
\label{STEN}
\|S\|\leq \|S\|_{E_D}\quad\text{and}\quad \|T\|\leq \|T\|_{E_D}.
\end{equation}
\end{remark}

In the case where both the operators $A$ and $D$ are normal one is
able to prove the existence of fixed points for the mappings on the
right-hand sides of \eqref{Xeq} and \eqref{YZ} provided that the
operators  $B$ and $C$ satisfy certain smallness conditions
involving the $\sE_{_D}$-norms of $B$ or $C$ (see \cite{AM2011}; for earlier
results with self-adjoint $A$ and/or $D$ see \cite{AdLT,AMM,MotRem}).

\begin{remark}
For the special case of $B=0$, the Riccati equation \eqref{RicA} turns into a linear equation
\begin{equation}
\label{SylvA}
XA-DX=C, \quad X\in\cB(\fH_{_A},\fH_{_D}),
\end{equation}
called the Sylvester equation. Similarly, for $C=0$, the Riccati equation \eqref{RicD}
turns into the Sylvester equation
\begin{equation}
\label{SylvD}
YD-AY=B, \quad Y\in\cB(\fH_{_D},\fH_{_A}).
\end{equation}
If the entry $D$ is a normal operator and $\sigma_{_A}\cap\sigma_{_D}=\emptyset$,
the unique bounded solutions $X$ and $Y$ to \eqref{SylvA} and \eqref{SylvD} are given, respectively, by
\begin{equation}
\label{SolSyl}
X=\int_{\sigma_{_D}} \sE_{_D}(d\mu)C(D-\mu)^{-1}\quad\text{and}\quad
Y=-\int_{\sigma_{_D}} (A-\mu)^{-1}B \sE_{_D}(d\mu)
\end{equation}
(cf. \eqref{XZ} and \eqref{YZ}; see \cite[Theorem 4.5]{AM2011}).
\end{remark}

The following statement is a particular case of
\cite[Theorem~5.7]{AM2011}.

\begin{theorem}
\label{QsolvN} Let both operators $A\in\cB(\fH_{_A})$ and
$D\in\cB(\fH_{_D})$ in \eqref{RicA} be normal. Assume that $0\neq
B\in\cB(\fH_{_A},\fH_{_D})$ and
\begin{equation}
\label{tsemibd}
  d=\dist\bigl(\sigma_{_A},\sigma_{_D}\bigr)>0.
\end{equation}
Also assume that the operator $C\in\cB(\fH_{_D},\fH_{_A})$ has a
finite $\sE_{_D}$--norm and
\begin{equation}
\label{BCest}
\sqrt{\|B\|\,\|C\|_{_{\sE_{_D}}}}<\frac{d}{2}.
\end{equation}
Then the Riccati equation \eqref{RicA} has a unique solution $X$
in the ball
\begin{equation}
\label{QEst2}
\left\{T\in\cB(\fH_{_A},\fH_{_D})\,\big|\,\,\,
\|T\|<\|B\|^{-1}\left(d-\sqrt{\|B\|\,\|C\|_{_{\sE_D}}}\right)\right\}.
\end{equation}
Moreover, the solution $X$ has a finite $\sE_{_D}$--norm that satisfies
the bound
\begin{equation}
\label{QEst1}
\|X\|_{\sE_{_D}}\leq
 \frac{1}{\|B\|}\,
\left(\frac{d}{2}-\sqrt{\frac{d^2}{4}-\|B\|\,\|C\|_{_{\sE_D}}}\right).
\end{equation}
\end{theorem}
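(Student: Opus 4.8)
The plan is to recast the Riccati equation \eqref{RicA} as a fixed point problem and solve it by the contraction mapping principle, working throughout in the operator norm and invoking the $\sE_{_D}$-norm only to bound the integrals. Set
\[
F(X)=\int_{\sigma_{_D}}\sE_{_D}(d\mu)\,C\,(A+BX-\mu)^{-1},
\]
the right-hand side of \eqref{Xeq}. First I would record, for $X$ in the relevant range, the equivalence between solving \eqref{RicA} and being a fixed point of $F$. The key point is that $\|B\|\,\|X\|<d$ forces $\sigma_{_{A+BX}}\cap\sigma_{_D}=\emptyset$: since $A$ is normal, $\|(A-\mu)^{-1}\|=1/\dist(\mu,\sigma_{_A})\le 1/d$ for $\mu\in\sigma_{_D}$, so a Neumann series yields
\[
\sup_{\mu\in\sigma_{_D}}\|(A+BX-\mu)^{-1}\|\le\frac{1}{d-\|B\|\,\|X\|}.
\]
Grouping \eqref{RicA} as $X(A+BX)-DX=C$ turns it into the Sylvester equation $XZ_{_A}-DX=C$ with $Z_{_A}=A+BX$; by the uniqueness of its bounded solution \eqref{SolSyl} (i.e.\ \cite[Theorem 4.5]{AM2011}) that solution is exactly $F(X)$, which gives $X=F(X)$, and reading the identification backwards shows that every fixed point of $F$ with $\|B\|\,\|X\|<d$ solves \eqref{RicA}.

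Next I would set up the contraction on the closed operator-norm ball $\fM_\gamma=\{T\in\cB(\fH_{_A},\fH_{_D})\,|\,\|T\|\le\gamma\}$, which is complete. The single analytic input is the integral estimate from \cite{AM2011}, namely $\bigl\|\int_{\sigma_{_D}}\sE_{_D}(d\mu)\,C\,G(\mu)\bigr\|_{\sE_{_D}}\le\|C\|_{\sE_{_D}}\sup_{\mu}\|G(\mu)\|$ for a uniformly bounded family $G(\mu)\in\cB(\fH_{_A})$. Combined with $\|\cdot\|\le\|\cdot\|_{\sE_{_D}}$ and the resolvent bound above, this gives $\|F(X)\|\le\|C\|_{\sE_{_D}}/(d-\|B\|\gamma)$ for $X\in\fM_\gamma$. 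For the Lipschitz estimate I would subtract two copies of $F$, use the resolvent identity $(A+BX_1-\mu)^{-1}-(A+BX_2-\mu)^{-1}=(A+BX_1-\mu)^{-1}B(X_2-X_1)(A+BX_2-\mu)^{-1}$, and apply the same estimate to obtain
\[
\|F(X_1)-F(X_2)\|\le\frac{\|B\|\,\|C\|_{\sE_{_D}}}{(d-\|B\|\gamma)^2}\,\|X_1-X_2\|,\qquad X_1,X_2\in\fM_\gamma.
\]

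I then choose the radius $\gamma=\gamma_-:=\|B\|^{-1}\bigl(\tfrac d2-\sqrt{\tfrac{d^2}4-\|B\|\,\|C\|_{\sE_{_D}}}\bigr)$, the smaller root of $\|B\|\gamma^2-d\gamma+\|C\|_{\sE_{_D}}=0$; assumption \eqref{BCest} makes the discriminant positive, so $\gamma_-<\gamma_+$ for the larger root $\gamma_+$. Using the algebraic identities $d-\|B\|\gamma_-=\|B\|\gamma_+$ and $\|C\|_{\sE_{_D}}=\|B\|\gamma_-\gamma_+$, one checks at once that $F$ maps $\fM_{\gamma_-}$ into itself ($\|F(X)\|\le\gamma_-$) and is a contraction there with constant $\gamma_-/\gamma_+<1$. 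Banach's theorem produces a unique fixed point $X\in\fM_{\gamma_-}$, which solves \eqref{RicA}; since $\gamma_-<\|B\|^{-1}(d-\sqrt{\|B\|\,\|C\|_{\sE_{_D}}})$ it lies in the ball \eqref{QEst2}. Applying the $\sE_{_D}$-estimate once more to $X=F(X)$ gives $\|X\|_{\sE_{_D}}\le\|C\|_{\sE_{_D}}/(d-\|B\|\,\|X\|)\le\gamma_-$, which is precisely \eqref{QEst1}.

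Finally, for uniqueness in the full ball \eqref{QEst2} I take an arbitrary solution $X'$ with $\|X'\|<\rho:=\|B\|^{-1}(d-\sqrt{\|B\|\,\|C\|_{\sE_{_D}}})$. Then $X'=F(X')$, and the $\sE_{_D}$-estimate yields $\|X'\|\le\|C\|_{\sE_{_D}}/(d-\|B\|\,\|X'\|)$, i.e.\ $\|B\|\,\|X'\|^2-d\,\|X'\|+\|C\|_{\sE_{_D}}\ge0$, so $\|X'\|\le\gamma_-$ or $\|X'\|\ge\gamma_+$. A short computation using $\sqrt{\|B\|\,\|C\|_{\sE_{_D}}}<d/2$ shows $\rho<\gamma_+$, excluding the second alternative; hence $X'\in\fM_{\gamma_-}$ and $X'=X$ by fixed-point uniqueness. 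I expect the only genuinely non-elementary ingredient to be the $\sE_{_D}$-norm integral estimate, which is where Definition \ref{DefEN} does its work and where the finiteness of $\|C\|_{\sE_{_D}}$ (rather than merely $\|C\|$) is essential; everything else is quadratic bookkeeping around the two roots $\gamma_\pm$.
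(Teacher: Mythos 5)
Your proof is correct. A point of context first: the paper gives no internal proof of Theorem \ref{QsolvN} at all --- it is stated as a particular case of \cite[Theorem 5.7]{AM2011} --- so the comparison is really with the argument behind that citation, which the paper itself sets up in Section \ref{SPrel}: the equivalence of \eqref{RicA} with the integral equation \eqref{Xeq} (via the Sylvester-equation representation \eqref{XZ}, \eqref{SolSyl}), the $\sE_{_D}$-norm bound for operator Stieltjes integrals of the form $\int\sE_{_D}(d\mu)\,C\,G(\mu)$, and Banach's fixed point theorem on a ball whose radius $\gamma_-$ is the smaller root of $\|B\|\gamma^2-d\gamma+\|C\|_{_{\sE_D}}=0$. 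That is exactly what you reconstruct. The one genuine stylistic difference is that you run the contraction in the \emph{operator-norm} ball $\fM_{\gamma_-}$ and invoke the $\sE_{_D}$-norm only to estimate the integrals (via \eqref{STEN}), rather than working in an $\sE_{_D}$-norm ball throughout; this is legitimate and arguably cleaner, since completeness of the operator-norm ball is immediate, and your final application of the integral estimate to $X=F(X)$ still recovers the $\sE_{_D}$-norm bound \eqref{QEst1}. Your identification of fixed points of $F$ with Riccati solutions correctly uses only that $D$ is normal and that $\sigma_{_{A+BX}}$ stays away from $\sigma_{_D}$ when $\|B\|\,\|X\|<d$ (normality of $A$ enters only through $\|(A-\mu)^{-1}\|=1/\dist(\mu,\sigma_{_A})$), which matches how the paper states \eqref{XZ} for the non-normal operator $Z_{_A}$. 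Note in passing that \eqref{SolSyl} as printed contains a typo ($(D-\mu)^{-1}$ should be $(A-\mu)^{-1}$); your usage silently corrects it.

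Two trivial edge cases, neither of which damages the argument: your claim $\rho<\gamma_+$ is strict only when $\|C\|_{_{\sE_D}}>0$; if $C=0$ then $\rho=\gamma_+$, but the exclusion of $\|X'\|\geq\gamma_+$ still follows because $\|X'\|<\rho$ is strict (and in that case $F\equiv 0$, so uniqueness is immediate anyway). Likewise, the division by $\sqrt{\|B\|\,\|C\|_{_{\sE_D}}}$ in your verification of $\rho\leq\gamma_+$ should be guarded by the same case distinction.
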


A similar statement concerns the Riccati equation \eqref{RicD}.

\begin{theorem}
\label{QsolvN1} Let both the operators $A\in\cB(\fH_{_A})$ and
$D\in\cB(\fH_{_D})$ in \eqref{RicD} be normal. Assume that $0\neq
C\in\cB(\fH_{_D},\fH_{_A})$ and condition \eqref{tsemibd} holds.
Assume in addition that the operator $B\in\cB(\fH_{_A},\fH_{_D})$
has a finite $\sE_{_D}$--norm and
\begin{equation}
\label{BCest1}
\sqrt{\|B\|_{_{\sE_{_D}}}\|C\|}<\frac{d}{2}.
\end{equation}
Then the Riccati equation \eqref{RicD} has a unique solution $Y$
in the ball
\begin{equation}
\label{QEst3}
\left\{S\in\cB(\fH_{_D},\fH_{_A})\,\big|\,\,\,
\|S\|<\|C\|^{-1}\left(d-\sqrt{\|B\|_{_{\sE_D}}\,\|C\|}\right)\right\}.
\end{equation}
Moreover, the solution $Y$ has a finite $\sE_{_D}$--norm that satisfies
the bound
\begin{equation}
\label{QEst4}
\|Y\|_{_{\sE_{_D}}}\leq
 \frac{1}{\|C\|}\,
\left(\frac{d}{2}-\sqrt{\frac{d^2}{4}-\|B\|_{_{\sE_{_D}}}\|C\|}\right).
\end{equation}
\end{theorem}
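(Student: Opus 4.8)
The plan is to deduce the statement from Theorem~\ref{QsolvN} by passing to adjoints, exploiting the fact that \eqref{RicD} and \eqref{RicA} are dual to each other. Suppose $Y\in\cB(\fH_{_D},\fH_{_A})$ solves \eqref{RicD}. Taking the adjoint of the identity $YD-AY+YCY=B$ and setting $X:=Y^*$, one obtains
\[
XA^*-D^*X+X(-C^*)X=-B^*,
\]
which is an equation of the form \eqref{RicA} in which the entries $A,D,B,C$ are replaced by $A^*,D^*,-C^*,-B^*$, respectively. Since $Y\mapsto Y^*$ is an involutive, operator-norm preserving bijection that carries solutions of \eqref{RicD} to solutions of the displayed equation and back, it suffices to verify the hypotheses of Theorem~\ref{QsolvN} for the transformed data, apply that theorem, and then transport its conclusions back along the adjoint map.

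First I would check the hypotheses. The operators $A^*$ and $D^*$ are again normal, and $-C^*\neq0$ because $C\neq0$. As complex conjugation is an isometry of $\bbC$ with $\sigma_{A^*}=\overline{\sigma_{_A}}$ and $\sigma_{D^*}=\overline{\sigma_{_D}}$, the distance is preserved, $\dist(\sigma_{A^*},\sigma_{D^*})=d$, so \eqref{tsemibd} holds for the new data. The one genuinely delicate point is the behaviour of the $\sE$-norm, and this is where I expect the main obstacle to lie: I must show
\[
\|{-B^*}\|_{\sE_{D^*}}=\|B\|_{\sE_{_D}}.
\]
This rests on the relation $\sE_{D^*}(\delta)=\sE_{_D}(\delta^*)$ between the spectral measures of $D$ and $D^*$, where $\delta^*=\{\bar z\,|\,z\in\delta\}$. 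Because $\{\delta_j\}\mapsto\{\delta_j^*\}$ is a bijection of the families of mutually disjoint Borel partitions of $\sigma_{_D}$ onto those of $\sigma_{D^*}$, the supremum in Definition~\ref{DefEN} defining $\|B\|_{\sE_{D^*}}$ agrees term by term with the one defining $\|B\|_{\sE_{_D}}$; combined with the convention $\|{-B^*}\|_{\sE_{D^*}}=\|B\|_{\sE_{D^*}}$ this gives the identity. In particular $\|B\|_{\sE_{_D}}<\infty$ ensures finiteness of the $\sE_{D^*}$-norm of the new right-hand side $-B^*$, and the smallness hypothesis \eqref{BCest} for the transformed equation reads $\sqrt{\|{-C^*}\|\,\|{-B^*}\|_{\sE_{D^*}}}=\sqrt{\|C\|\,\|B\|_{\sE_{_D}}}<d/2$, i.e.\ exactly \eqref{BCest1}.

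Finally I would transport the conclusions. Theorem~\ref{QsolvN} yields a unique $X$ in the ball \eqref{QEst2} for the transformed data; substituting $\|{-C^*}\|=\|C\|$ and $\|{-B^*}\|_{\sE_{D^*}}=\|B\|_{\sE_{_D}}$ and setting $Y=X^*$ (so that $\|Y\|=\|X\|$) turns that ball into precisely \eqref{QEst3}, while the bijectivity of $Y\mapsto Y^*$ transfers both existence and uniqueness. For the norm estimate, the convention $\|Y\|_{\sE_{_D}}=\|Y^*\|_{\sE_{_D}}=\|X\|_{\sE_{_D}}$ together with $\|X\|_{\sE_{_D}}=\|X\|_{\sE_{D^*}}$ (the same conjugation argument applied to $X$) reduces the bound \eqref{QEst1} for $X$ to the asserted estimate \eqref{QEst4} for $Y$. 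Everything beyond the $\sE$-norm invariance under the passage $D\to D^*$ and adjunction is routine bookkeeping through the isometry $Y\mapsto Y^*$.
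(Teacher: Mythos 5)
Your proof is correct, and it takes a route the paper does not spell out. The paper offers no argument for Theorem~\ref{QsolvN1} at all: it presents it as a statement ``similar'' to Theorem~\ref{QsolvN} (both going back to \cite[Theorem~5.7]{AM2011}), the implicit proof being to re-run the same fixed-point scheme on the dual integral equation \eqref{YZ}, in which the spectral measure $\sE_{_D}$ acts from the other side. You instead deduce Theorem~\ref{QsolvN1} from Theorem~\ref{QsolvN} used as a black box, via adjunction: taking adjoints in \eqref{RicD} and setting $X=Y^*$ yields an equation of type \eqref{RicA} with data $(A^*,D^*,-C^*,-B^*)$, and adjunction is precisely the left/right symmetry that separates \eqref{Xeq} from \eqref{YZ}. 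The key point your argument needs, and which you correctly isolate and justify, is the invariance of the $\sE$-norm under $D\mapsto D^*$: since $\sE_{D^*}(\delta)=\sE_{_D}(\delta^*)$ and $\delta\mapsto\delta^*$ is a disjointness-preserving bijection of the Borel subsets of $\sigma_{D^*}$ onto those of $\sigma_{_D}$, the sums in Definition~\ref{DefEN} are matched term by term, so $\|{-B^*}\|_{\sE_{D^*}}=\|B\|_{\sE_{_D}}$ and likewise $\|X\|_{\sE_{D^*}}=\|X\|_{\sE_{_D}}$. Combined with $\|{-C^*}\|=\|C\|$, this turns \eqref{BCest}, \eqref{QEst2}, \eqref{QEst1} for the transformed data into exactly \eqref{BCest1}, \eqref{QEst3}, \eqref{QEst4}, and the isometric bijection $Y\mapsto Y^*$ transfers both existence and uniqueness between the respective balls. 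What your reduction buys is that the contraction argument is invoked only once. Note also that the more obvious duality---conjugating $L$ by the flip of $\fH_{_A}$ and $\fH_{_D}$---would \emph{not} work here, since it produces hypotheses in terms of $\|B\|_{\sE_{_A}}$ rather than $\|B\|_{\sE_{_D}}$; the adjoint duality you chose is the right one.
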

\begin{corollary}
\label{CorN1}
Assume condition
\begin{equation}
\label{BCED2}
\sqrt{\|B\|_{_{\sE_{_D}}}\|C\|_{_{\sE_{_D}}}}<\frac{d}{2}.
\end{equation}
Under this condition the following inequalities hold:
\begin{equation}
\label{XYl1}
\|X\|_{_{\sE_{_D}}}\|Y\|_{_{\sE_{_D}}}\leq\frac{\|B\|_{_{\sE_{_D}}}\|C\|_{_{\sE_{_D}}}}{d^2/4}<1.
\end{equation}
\end{corollary}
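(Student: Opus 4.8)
The plan is to deduce the estimate purely from the two solvability theorems just established, so the only real work is to combine their norm bounds. The first step is to check that hypothesis \eqref{BCED2} makes both theorems applicable at once. Indeed, by the elementary comparison \eqref{STEN} one has $\|B\|\le\|B\|_{\sE_D}$ and $\|C\|\le\|C\|_{\sE_D}$, whence
\[
\sqrt{\|B\|\,\|C\|_{\sE_D}}\le\sqrt{\|B\|_{\sE_D}\,\|C\|_{\sE_D}}<\frac{d}{2}
\qquad\text{and}\qquad
\sqrt{\|B\|_{\sE_D}\,\|C\|}\le\sqrt{\|B\|_{\sE_D}\,\|C\|_{\sE_D}}<\frac{d}{2}.
\]
These are exactly conditions \eqref{BCest} and \eqref{BCest1}, so (in the nondegenerate case $B,C\neq0$; otherwise the product below is trivially zero) Theorems \ref{QsolvN} and \ref{QsolvN1} apply and provide the solutions $X$ and $Y$ subject to \eqref{QEst1} and \eqref{QEst4}.

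The second step — the only computational point — is to linearize those square-root bounds. For $0\le t<d^2/4$ put $f(t)=\frac{d}{2}-\sqrt{\frac{d^2}{4}-t}$ and rationalize by multiplying numerator and denominator by $\frac{d}{2}+\sqrt{\frac{d^2}{4}-t}$:
\[
f(t)=\frac{t}{\dfrac{d}{2}+\sqrt{\dfrac{d^2}{4}-t}}\le\frac{2t}{d}.
\]
Feeding $t=\|B\|\,\|C\|_{\sE_D}$ into \eqref{QEst1} and $t=\|B\|_{\sE_D}\,\|C\|$ into \eqref{QEst4}, the prefactors $1/\|B\|$ and $1/\|C\|$ cancel the corresponding operator norms and leave
\[
\|X\|_{\sE_D}\le\frac{2\,\|C\|_{\sE_D}}{d},
\qquad
\|Y\|_{\sE_D}\le\frac{2\,\|B\|_{\sE_D}}{d}.
\]

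Finally I would multiply these two inequalities to obtain
\[
\|X\|_{\sE_D}\,\|Y\|_{\sE_D}\le\frac{4\,\|B\|_{\sE_D}\,\|C\|_{\sE_D}}{d^2}=\frac{\|B\|_{\sE_D}\,\|C\|_{\sE_D}}{d^2/4},
\]
and the strict inequality $<1$ is immediate since \eqref{BCED2} reads precisely $\|B\|_{\sE_D}\,\|C\|_{\sE_D}<d^2/4$. I do not anticipate any serious obstacle: the argument is a short reduction to Theorems \ref{QsolvN} and \ref{QsolvN1} followed by bookkeeping. The one point needing a little care is the rationalization $f(t)\le 2t/d$; without it, multiplying the raw bounds \eqref{QEst1} and \eqref{QEst4} directly produces an awkward product of two differences of square roots that is cumbersome to compare with $\|B\|_{\sE_D}\|C\|_{\sE_D}/(d^2/4)$, whereas the linearized form makes both the cancellation of $\|B\|,\|C\|$ and the final bound transparent.
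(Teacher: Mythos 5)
Your proposal is correct and follows essentially the same route as the paper: verify via \eqref{STEN} that \eqref{BCED2} implies both \eqref{BCest} and \eqref{BCest1}, then rationalize the square-root bounds \eqref{QEst1} and \eqref{QEst4} exactly as in the paper's displays \eqref{rhs1}--\eqref{rhs2}, and multiply. Your generic function $f(t)$ is just a repackaging of that same conjugate-multiplication step, so there is nothing substantively different to report.
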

\begin{proof}
Notice that due to \eqref{STEN} the bound \eqref{BCED2} implies both
the estimates \eqref{BCest} and \eqref{BCest1}. Hence, the existence
of solutions $X$ and $Y$ satisfying the corresponding bounds
\eqref{QEst1} and \eqref{QEst4} is ensured.
For the right-hand sides of these bounds we have
\begin{align}
\label{rhs1}
\frac{1}{\|B\|}\,
\left(\frac{d}{2}-\sqrt{\frac{d^2}{4}-\|B\|\,\|C\|_{_{\sE_D}}}\right)&=
\frac{\|C\|_{_{\sE_D}}}{\frac{d}{2}+\sqrt{\frac{d^2}{4}-\|B\|\,\|C\|_{_{\sE_D}}}}< \frac{\|C\|_{_{\sE_D}}}{d/2},\\
\label{rhs2}
\frac{1}{\|C\|}\,
\left(\frac{d}{2}-\sqrt{\frac{d^2}{4}-\|B\|_{_{\sE_{_D}}}\|C\|}\right)&=
\frac{\|B\|_{_{\sE_{_D}}}}{\frac{d}{2}+\sqrt{\frac{d^2}{4}-\|B\|_{_{\sE_{_D}}}\|C\|}}<
\frac{\|B\|_{_{\sE_D}}}{d/2}.
\end{align}
Then \eqref{QEst1} and \eqref{QEst4} together with \eqref{BCED2} imply \eqref{XYl1}.
\end{proof}
\begin{remark}
\label{Rem1} Taking into account \eqref{STEN}, from the bound
\eqref{XYl1} it follows that the products $XY$ are $YX$ are strict
contractions $\|XY\|<1$ and $\|YX\|<1$, which means that under the
condition \eqref{BCED2} the block operator matrix \eqref{L} is block
diagonalizable in any of the two forms \eqref{Ldiag}.
\end{remark}

Now consider the operators $Z_A$ and $Z_D$ built according
\eqref{Za} and \eqref{Zd} of the corresponding unique solutions $X$
and $Y$ referred to in Theorems \ref{QsolvN} and \ref{QsolvN1}. In
particular, under the conditions of Theorem \ref{QsolvN} the
operator-valued function $W_{_A}(z)$, introduced in \eqref{WAD}, is
boundedly invertible and holomorphic in $z$ at least on the open
$d/2$-neighborhood $\cO_{d/2}(\sigma_{_A})$ of the set
$\sigma_{_A}$. By \eqref{QEst1} this neighborhood contains the whole
spectrum of $Z_{_A}$. Analogously, under the hypothesis
of Theorem \ref{QsolvN1} the operator-valued function $W_{_D}(z)$ is
boundedly invertible and holomorphic in $z$ at least on the open
$d/2$-neighborhood $\cO_{d/2}(\sigma_{_D})$ of the set $\sigma_{_D}$
that contains the whole spectrum of $Z_{_D}$. In such a case, the
factorization \eqref{roots} yields that $Z_A$ and $Z_D$ are nothing
but the \emph{operator roots} of the Schur complements $M_{_A}(z)$
and $M_{_D}(z)$ in the sense of Markus-Matsaev
\cite{MarkusMatsaev75}. From \eqref{roots} it follows
that $\spec(M_{_A})\cap\cO_{d/2}(\sigma_{_A})=\spec(Z_{_A})$ and
$\spec(M_{_D})\cap\cO_{d/2}(\sigma_{_A})=\spec(Z_{_D})$.

The same consideration is relevant to the operators
$\widetilde{Z}_A$ and $\widetilde{Z}_D$ defined respectively by
\eqref{tZa} and \eqref{tZd} provided that $X$ and $Y$ are again the
unique solutions mentioned in Theorems \ref{QsolvN} and
\ref{QsolvN1}. In the sense of the factorizations \eqref{troots},
these operators \eqref{tZa} and \eqref{tZd} may be named the
\emph{left operator roots} of the Schur complements  $M_{_A}$ and
$M_{_D}$, respectively (cf. \cite[Theorem 4.1]{HMM}).

\section{Factorization of one of the Schur complements in the Feshbach case}
\label{sFactor}

From now on we assume that the entries $A$ and $D$ are self-adjoint
operators. It is also supposed that the spectra of $A$
and $D$ overlap. More precisely, we want to consider the situation
where at least a part of the spectrum of $A$ lies on the absolutely
continuous spectrum of $D$. There are examples (see \cite[Remark 3.9
and Lemma 3.10]{AMM}) which show that, in  such a spectral
situation, (conventional) solutions to the associated Riccati
equations may not exist at all.

Nevertheless, one can think of the Markus-Matsaev factorization
\cite{MarkusMatsaev75} and operator roots of the analytically
continued Schur complements. This idea has been fist elaborated  in
\cite{MM99} for self-adjoint block operator matrices $L$ involving
bounded off-diagonal entries. Later on, the approach of  \cite{MM99}
has been extended in \cite{HMM-JOT} and \cite{HMM} to some unbounded
off-diagonal entries in the respective cases of self-adjoint and
non-self-adjoint $L$.

\begin{figure}[htb]
\qquad\includegraphics[angle=0,width=12.cm]{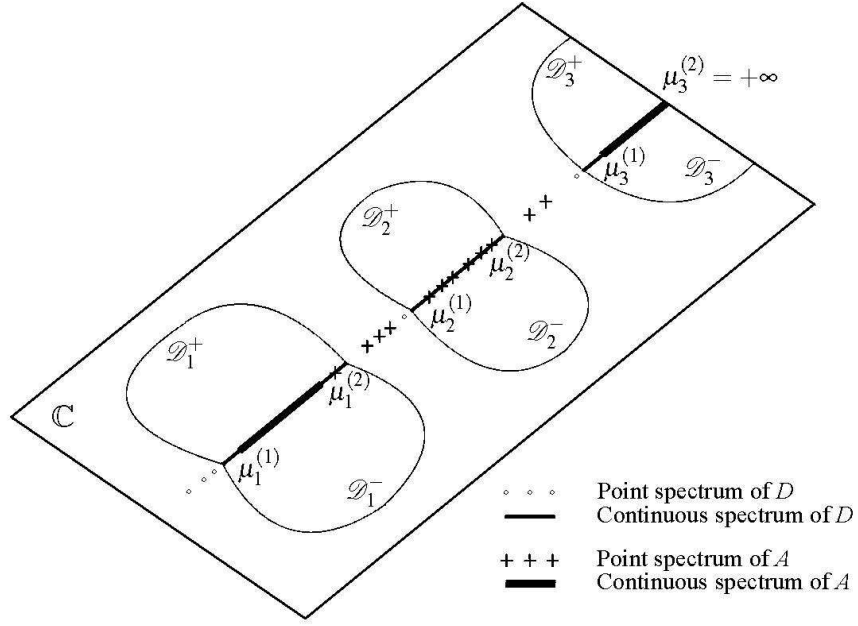}
\caption{An example of the spectral situation considered in
\cite{MM99} with a self-adjoint operator $D$ having three disjoint
intervals of the absolutely continuous spectrum
$[\mu_1^{(j)},\mu_2^{(j)}]$, $j=1,2,3$. In \cite{MM99} it is assumed
that $C=B^*$ and the entries $D$ and $B$ are such that the Schur
complement $M_A(z)$ admits analytic continuation in $z$ through the
intervals $[\mu_1^{(j)},\mu_2^{(j)}]\subset\sigma_{_D}^{\rm ac}$ into
certain domains $\cD^\pm_j$ (lying already on the unphysical
sheets of the Riemann surface of $M_{_{A}}$).} \label{Fig1}
\end{figure}

In order to recall the idea of the approach \cite{MM99}, let us
rewrite the Schur complement \eqref{MA} in terms of the spectral
measure $\sE_{_D}$ of the self-adjoint operator $D$:
\begin{align}
\label{MAE}
M_{_A}(z)&=A-z-B\int_{\sigma_{_D}} \sE_{_D}(d\mu)\frac{1}{\mu-z}C\\
\label{MAE1}
&=A-z-\int_\bbR B\,d\sE^{^D}(\mu)C\frac{1}{\mu-z},
\end{align}
where {$\sE^{^D}(\mu)=\sE_{_D}\bigl((-\infty,\mu)\bigr)$} is the
spectral function of $D$. In \cite{MM99} it is assumed (for $C=B^*$)
that the entries $D$ and $B$ are such that the operator-valued
function $M_A(z)$ admits analytic continuation in $z$ through the
appropriate segments of $\sigma_{_D}^{\rm ac}$ to certain domains
located on the so-called {\textit{unphysical} sheets} of the {$z$}
plane (see Figure \ref{Fig1}; this figure is borrowed from
\cite{MM99}, to which we also refer for the concept of unphysical
sheet).

To make the presentation as clear as possible, we reduce the
consideration to the case where all the spectrum of $D$ consists of
a single finite interval of the absolutely continuous spectrum, that
is,
\begin{equation}
\label{SDac}
\sigma_{_D}=\sigma_{_D}^{\rm ac}=\overline{\Delta},
\end{equation}
where
\begin{equation}
\label{Delta}
\Delta=(\alpha,\beta)\quad\text{for some}\quad\alpha,\beta\in\bbR,\quad \alpha<\beta,
\end{equation}
and
\begin{equation}
\label{Feshbach}
\sigma_{_A}\subset\Delta.
\end{equation}

\begin{figure}[htb]
\centering
\includegraphics[angle=-1.5,width=7.cm]{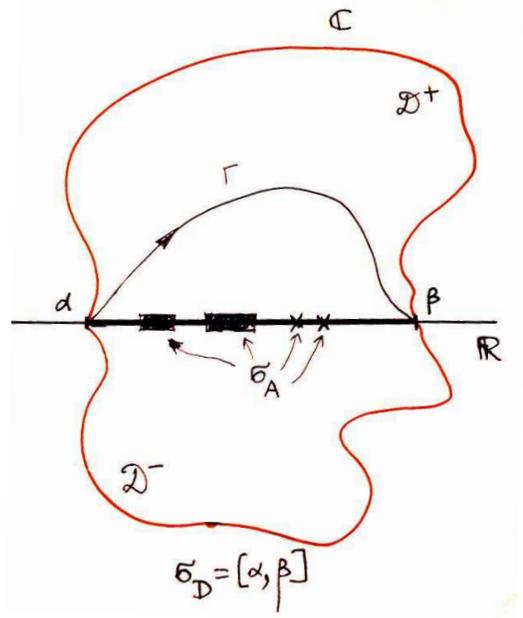}
\caption{The spectra of the operators $A$ and $D$, holomorphy
domains $\cD^-$ and $\cD^+$ of the continued operator-valued
function $K$, and the integration contour $\Gamma$.} \label{Fig2}
\end{figure}

\noindent Main hypothesis (see \cite{MM99}) is that the $\cB(\fH_{_A})$-valued
function
\begin{equation}
\label{Kdef}
K(\mu):=B\sE^D(\mu)C, \quad \mu\in\bbR,
\end{equation}
is real analytic on the interval $\Delta$ and admits
analytic continuation from $\Delta$ onto a domain
$\cD\subset\bbC$. This assumption entails the inclusion
$\Delta\subset\cD$. By $\cD^-$ and $\cD^+$ we will denote
the parts\footnote{Notice that in the self-adjoint case with
$C=B^*$, the domains $\cD^-$ and $\cD^+$ are necessarily symmetric
with respect to the real axis, $\cD^+=(\cD^-)^*$, and
$K(\mu^*)=K(\mu)^*$.} of the domain $\cD$ lying respectively
in the lower and upper half-planes of $\bbC$,
$\cD^\pm=\cD\cap\bbC^\pm$ (see Figure \ref{Fig2}).  The derivative
$K'(\mu)=\frac{d}{d\mu}K(\mu)$, $\mu\in\cD$, is allowed to be weakly
singular at the points $\alpha$ and $\beta$, namely,
\begin{subequations}
    \label{Ksing1}
\begin{align}
\tag{\ref{Ksing1}\,\textit{a}}\label{Ksing1:a}
\|K'(\mu)\| & \leq c |\mu-\alpha|^{-\nu}\quad
\text{for any $\mu\neq\alpha$ lying in an open neighborhood of $\alpha$ with $\cD$},\\
\tag{\ref{Ksing1}\,\textit{b}}\label{Ksing1:b}
\|K'(\mu)\| & \leq c |\mu-\beta|^{-\nu} \quad
\text{for any $\mu\neq\beta$ lying in an open neighborhood of $\beta$ in $\cD$},
\end{align}
\end{subequations}
where $c$ and $\nu$ are some real constants, $c>0$ and $0\leq\nu<1$. All the above allows
one to rewrite \eqref{MAE} in the form
\begin{align}
\label{MAEK}
M_{_A}(z)&=A-z-\int_\alpha^\beta d\mu\frac{K'(\mu)}{\mu-z},
\end{align}
where the integral term  is well defined and holomorphic for
$z\in\bbC\setminus\overline{\Delta}$.

Suppose that $\Gamma^-$ is a piecewise smooth Jordan contour having
the end points $\alpha$ and $\beta$ and, except for these points, lying
totally in $\cD^-$. Similarly, the notation $\Gamma^+$ is used for a
piecewise smooth Jordan contour having the end points $\alpha$ and
$\beta$ and, except for $\alpha$ and $\beta$, lying completely in
$\cD^+$. For $\Gamma=\Gamma^-$ or $\Gamma=\Gamma^+$, by
$\Omega(\Gamma)$ we denote the domain lying inside the closed curve
formed by the interval $\Delta$ and contour $\Gamma$. Thus,
$\Omega(\Gamma^\pm)\subset\cD^\pm$.

The integral term on the right-hand side of \eqref{MAEK} is a Cauchy
type integral. Then it is elementary to prove that, under the
assumptions adopted in this section, the function $M_{_A}(z)$ admits
analytic continuation in $z$ across the interval $(a,b)$ both from
the bottom up and from the top down. After such a continuation one
arrives to the sheet(s) of the Riemann surface of the function
$M_{_A}$ that differs from the original sheet of the spectral
parameter plane. For $z\in\Omega(\Gamma)$ the corresponding
continuation of $M_{_A}$ is given by
\begin{equation}
\label{MAK}
M_{_A}(z,\Gamma):=A-z-\int_{\Gamma} d\mu \frac{K'(\mu)}{\mu-z}, \quad \Gamma=\Gamma^\pm.
\end{equation}
We note that as a function of $z\in\bbC\setminus\Gamma^\pm$, the mapping
$M_{_A}(\cdot,\Gamma^\pm)$ possesses the property (see \cite[Lemma 2.1]{MM99})
\begin{equation}
\label{M1Gresidue}
 M_{_A}(z,\Gamma^\fs)=\left\{\begin{array}{cl}
 M_{_A}(z), & z\in\bbC\setminus\overline{\Omega(\Gamma^\fs)},\\
 M_{_A}(z)+2\pi i\,\mathfrak{s}\, K'(z), & z\in \Omega(\Gamma^\fs),
\end{array}\right. \quad \fs=\pm 1.
\end{equation}
Here and hereafter we identify the number $\fs=+1$ or $\fs=-1$ in a superscript or subscript
with the corresponding sign in $\pm$, that is,  e.g.,
$\Gamma^{+1}\equiv\Gamma^+$ and $\Gamma^{-1}\equiv\Gamma^-$.

Now let us introduce the equation
\begin{equation}
\label{EqMain} Z_{_A}=A-\int_\Gamma d\mu\, K'(\mu)(Z_{_A}-\mu)^{-1},
\quad \Gamma=\Gamma^\pm, \quad \Omega(\Gamma^\pm)\subset\cD^\pm,
\end{equation}
that makes sense, of course, provided the spectrum $\sigma_{_{Z_A}}$ of
the unknown $Z_{_A}\in\cB(\fH_{_A})$ does not intersect the integration
contour $\Gamma$, i.e. if $\sigma_{_{Z_A}}\cap\Gamma=\emptyset$. Also, let us
introduce the quantity
\begin{equation}
\label{cVK}
\cV_{_K}(\Gamma):=\int_\Gamma |d\mu|\,\|K'(\mu)\|
\end{equation}
that we call the variation of the operator-valued function $K$ in
\eqref{Kdef} along the contour $\Gamma$, and let
\begin{equation}
\label{dsG}
d(\Gamma):=\dist(\sigma_{_A},\Gamma).
\end{equation}

Applying to \eqref{EqMain} Banach's Fixed Point Theorem results in
(cf. \cite[Theorem~3.1]{MM99}):

\begin{theorem}
\label{ThSolv1} Let $\Gamma^\fs$, $\fs=\pm1$, be a piecewise smooth
Jordan contour having the end points $\alpha$, $\beta$ and being
such that $\Omega(\Gamma^\pm)\subset\cD^\pm$. Assume that
\begin{equation}
\cV_K(\Gamma^\fs)<\,\frac{1}{4}\,d({\Gamma^\fs})^2.
\label{cVKd}
\end{equation}
Then the equation \eqref{EqMain} has a solution $Z_{_A}^\fs$ of the form
\begin{equation}
\label{ZAT}
Z_{_A}^\fs=A+T^\fs
\end{equation}
with
\begin{equation}
\label{Tmbound}
\|T^\fs\|\leq r(\Gamma^\fs),
\end{equation}
where
\begin{equation}
\label{Tmbound}
r(\Gamma^\fs)=\frac{d(\Gamma^\fs)}{2}-\sqrt{\frac{d(\Gamma^\fs)^2}{4}-\cV_{_K}(\Gamma^\fs)}.
\end{equation}
The solution $Z_{_A}^\fs$ of the form \eqref{ZAT} is unique in the closed
ball in $\cB(\fH_{_A})$ centered at zero and having the radius
$d(\Gamma^\fs)-\sqrt{\cV_{_K}(\Gamma^\fs)}$.
\end{theorem}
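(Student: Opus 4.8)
The plan is to rewrite \eqref{EqMain} as a fixed-point equation for the perturbation $T:=Z_{_A}-A$ and then invoke Banach's Fixed Point Theorem, exactly along the lines of the disjoint-spectra results recalled in Section \ref{SPrel}. Fix the sign $\fs=\pm1$ and the contour $\Gamma^\fs$. Substituting the ansatz $Z_{_A}=A+T$ into \eqref{EqMain}, one sees that $Z_{_A}=A+T$ solves \eqref{EqMain} precisely when $T$ is a fixed point of the map
\[
F(T):=-\int_{\Gamma^\fs} d\mu\, K'(\mu)\,(A+T-\mu)^{-1}.
\]
First I would verify that $F$ is well defined on every closed ball $\{T\in\cB(\fH_{_A}):\|T\|\leq\rho\}$ with $\rho<d(\Gamma^\fs)$. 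Because $A$ is self-adjoint and $\dist(\mu,\sigma_{_A})\geq d(\Gamma^\fs)$ for each $\mu\in\Gamma^\fs$, the resolvent $(A-\mu)^{-1}$ exists with $\|(A-\mu)^{-1}\|\leq 1/d(\Gamma^\fs)$; writing $A+T-\mu=(A-\mu)\bigl(I+(A-\mu)^{-1}T\bigr)$ and expanding in a Neumann series yields, for $\|T\|\leq\rho$,
\[
\|(A+T-\mu)^{-1}\|\leq\frac{1}{d(\Gamma^\fs)-\rho}.
\]
Combined with the weak-singularity bounds \eqref{Ksing1}, which guarantee $\cV_{_K}(\Gamma^\fs)<\infty$ (the exponent $\nu<1$ makes $\|K'\|$ integrable near the endpoints $\alpha,\beta$), this shows that the operator-valued integral defining $F(T)$ converges and depends continuously on $T$.

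Next I would show that $F$ maps the closed ball $\fK:=\{T\in\cB(\fH_{_A}):\|T\|\leq r(\Gamma^\fs)\}$ into itself. From the resolvent bound above with $\rho=r(\Gamma^\fs)$,
\[
\|F(T)\|\leq\int_{\Gamma^\fs}|d\mu|\,\|K'(\mu)\|\,\|(A+T-\mu)^{-1}\|\leq\frac{\cV_{_K}(\Gamma^\fs)}{d(\Gamma^\fs)-r(\Gamma^\fs)}.
\]
The key algebraic observation is that $r(\Gamma^\fs)$ is exactly the smaller root of the quadratic $t^2-d(\Gamma^\fs)\,t+\cV_{_K}(\Gamma^\fs)=0$, so that $\cV_{_K}(\Gamma^\fs)=r(\Gamma^\fs)\bigl(d(\Gamma^\fs)-r(\Gamma^\fs)\bigr)$; hence the right-hand side above equals $r(\Gamma^\fs)$ and $F(\fK)\subseteq\fK$. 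Here the assumption \eqref{cVKd} enters to make the discriminant $d(\Gamma^\fs)^2/4-\cV_{_K}(\Gamma^\fs)$ positive, so that $r(\Gamma^\fs)$ is real. For the contraction estimate I would use the resolvent identity
\[
(A+T_1-\mu)^{-1}-(A+T_2-\mu)^{-1}=-(A+T_1-\mu)^{-1}(T_1-T_2)(A+T_2-\mu)^{-1},
\]
which gives, for $T_1,T_2\in\fK$,
\[
\|F(T_1)-F(T_2)\|\leq\frac{\cV_{_K}(\Gamma^\fs)}{\bigl(d(\Gamma^\fs)-r(\Gamma^\fs)\bigr)^2}\,\|T_1-T_2\|=\frac{r(\Gamma^\fs)}{d(\Gamma^\fs)-r(\Gamma^\fs)}\,\|T_1-T_2\|
\]
by the same quadratic relation. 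Since \eqref{cVKd} forces $r(\Gamma^\fs)<d(\Gamma^\fs)/2$, the Lipschitz constant $r(\Gamma^\fs)/(d(\Gamma^\fs)-r(\Gamma^\fs))$ is strictly less than $1$, so $F$ is a contraction of $\fK$. Banach's Fixed Point Theorem then produces a unique $T^\fs\in\fK$, and $Z_{_A}^\fs=A+T^\fs$ is the asserted solution with $\|T^\fs\|\leq r(\Gamma^\fs)$.

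The step I expect to be the main obstacle is the uniqueness claim in the \emph{larger} ball of radius $d(\Gamma^\fs)-\sqrt{\cV_{_K}(\Gamma^\fs)}$, since on that ball the Lipschitz constant above only reaches $1$ and Banach's theorem no longer applies directly. The remedy is an \emph{a priori} bound: any fixed point $T$ with $\|T\|<d(\Gamma^\fs)$ satisfies $\|T\|=\|F(T)\|\leq\cV_{_K}(\Gamma^\fs)/(d(\Gamma^\fs)-\|T\|)$, i.e.
\[
\|T\|^2-d(\Gamma^\fs)\,\|T\|+\cV_{_K}(\Gamma^\fs)\geq 0,
\]
so $\|T\|$ cannot lie strictly between the two roots $r(\Gamma^\fs)$ and $d(\Gamma^\fs)-r(\Gamma^\fs)$ of the quadratic. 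A short computation (squaring, using $\cV_{_K}(\Gamma^\fs)<d(\Gamma^\fs)^2/4$) shows $d(\Gamma^\fs)-\sqrt{\cV_{_K}(\Gamma^\fs)}<d(\Gamma^\fs)-r(\Gamma^\fs)$, so every fixed point in the ball of radius $d(\Gamma^\fs)-\sqrt{\cV_{_K}(\Gamma^\fs)}$ must in fact satisfy $\|T\|\leq r(\Gamma^\fs)$ and thus lie in $\fK$, where uniqueness has already been established. This reduces the uniqueness statement in the larger ball to the one in $\fK$ and completes the argument.
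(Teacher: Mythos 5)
Your proof is correct and follows exactly the route the paper itself indicates: Theorem \ref{ThSolv1} is obtained there by ``applying to \eqref{EqMain} Banach's Fixed Point Theorem'' (cf.\ \cite[Theorem~3.1]{MM99}), which is precisely your contraction argument for $T\mapsto -\int_{\Gamma^\fs}d\mu\,K'(\mu)(A+T-\mu)^{-1}$ on the ball of radius $r(\Gamma^\fs)$, using the relation $\cV_{_K}(\Gamma^\fs)=r(\Gamma^\fs)\bigl(d(\Gamma^\fs)-r(\Gamma^\fs)\bigr)$. Your a priori-bound dichotomy ($\|T\|\leq r(\Gamma^\fs)$ or $\|T\|\geq d(\Gamma^\fs)-r(\Gamma^\fs)$ for any fixed point), which reduces uniqueness in the closed ball of radius $d(\Gamma^\fs)-\sqrt{\cV_{_K}(\Gamma^\fs)}$ to uniqueness in the small ball, is a sound way of supplying the detail the paper leaves to the citation.
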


\begin{lemma}
\label{HuniqueCor} For a fixed value of $\fs$, $\fs=\pm1$, the unique solution
$Z_{_A}^\fs$ of the form \eqref{ZAT} referred to in Theorem~\ref{ThSolv1}
is the same for all the piecewise smooth Jordan contours $\Gamma^\fs$
having the end points $\alpha$, $\beta$ and being such that
\begin{equation}
\label{OcVKd}
\Omega(\Gamma^\fs)\subset\cD^\fs\quad\text{and}\quad
\cV_K(\Gamma^\fs)<\,\frac{1}{4}\,d({\Gamma^\fs})^2.
\end{equation}
Moreover, the following norm bound
holds
\begin{equation}
\label{Xr0}
\|Z_{_A}^\fs-A\|\leq r_0(K)
\end{equation}
with
\begin{equation}
\label{r0}
r_0(K):=\inf\limits_{\Gamma^\fs:\,\omega(\Gamma_l)>0}
r(\Gamma^\fs)
\end{equation}
where the infimum is taken over all piecewise smooth Jordan contours
$\Gamma_l$ satisfying \eqref{OcVKd}, the quantity $r(\Gamma^\fs)$ is
given by \eqref{Tmbound}, and
\begin{equation}
\label{omega}
\omega(\Gamma_l)=d_0^2(\Gamma^\fs)-4\cV_{_K}(\Gamma_l).
\end{equation}
\end{lemma}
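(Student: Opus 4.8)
The plan is to reduce the whole statement to a single assertion — that the solution $Z_{_A}^\fs$ produced by Theorem~\ref{ThSolv1} does not depend on the particular admissible contour $\Gamma^\fs$ — and then to read off the norm bound \eqref{Xr0} as an immediate consequence. Indeed, once contour-independence is established, $Z_{_A}^\fs$ is one fixed operator that, for \emph{every} contour $\Gamma^\fs$ satisfying \eqref{OcVKd} (equivalently, $\omega(\Gamma_l)>0$), obeys $\|Z_{_A}^\fs-A\|=\|T^\fs\|\le r(\Gamma^\fs)$ by \eqref{Tmbound}; taking the infimum over all such contours yields $\|Z_{_A}^\fs-A\|\le r_0(K)$, i.e. \eqref{Xr0}. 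Thus the entire content sits in the independence claim.

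To prove independence I would fix the sign $\fs$ and two admissible contours $\Gamma_1=\Gamma_1^\fs$, $\Gamma_2=\Gamma_2^\fs$, with respective solutions $Z_1,Z_2$ of the form \eqref{ZAT}. The strategy is to show that $Z_1$ \emph{already} solves equation \eqref{EqMain} written for $\Gamma_2$, and then to invoke the uniqueness clause of Theorem~\ref{ThSolv1} to conclude $Z_1=Z_2$. Since $Z_1$ solves $Z_1=A-\int_{\Gamma_1}K'(\mu)(Z_1-\mu)^{-1}\,d\mu$, it suffices to prove that the $\cB(\fH_{_A})$-valued integral $\int_\Gamma K'(\mu)(Z_1-\mu)^{-1}\,d\mu$ is unchanged under $\Gamma_1\rightsquigarrow\Gamma_2$. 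As $\Gamma_1$ and $\Gamma_2$ share the end points $\alpha,\beta$ and, apart from these, lie in the holomorphy domain $\cD^\fs$ of $K'$, the integrand $\mu\mapsto K'(\mu)(Z_1-\mu)^{-1}$ is holomorphic on $\cD\setminus\sigma_{_{Z_1}}$; by the operator-valued Cauchy theorem the two integrals coincide as soon as $\Gamma_1$ and $\Gamma_2$ are homotopic rel $\{\alpha,\beta\}$ inside $\cD^\fs\setminus\sigma_{_{Z_1}}$. The weak end-point singularities of $K'$ permitted by \eqref{Ksing1} are harmless, being integrable ($0\le\nu<1$): one excises shrinking neighbourhoods of the fixed points $\alpha,\beta$ and passes to the limit.

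The main obstacle is exactly the justification of this deformation, that is, localizing $\sigma_{_{Z_1}}$ so that the homotopy can sidestep it. The bound $\|Z_1-A\|\le r(\Gamma_1)<d(\Gamma_1)/2$, together with the self-adjointness of $A$, gives $\sigma_{_{Z_1}}\subset\overline{\cO_{r(\Gamma_1)}(\sigma_{_A})}$ and hence $\dist(\sigma_{_{Z_1}},\Gamma_1)\ge d(\Gamma_1)-r(\Gamma_1)>d(\Gamma_1)/2>0$; so $\sigma_{_{Z_1}}$ is a compact set clustered near $\sigma_{_A}\subset(\alpha,\beta)$, bounded away from $\Gamma_1$ and from the end points. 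The \emph{safe} deformations are those that push the contour outward, away from $\sigma_{_A}$: if $\Gamma_3=\Gamma_3^\fs$ is admissible with $\Omega(\Gamma_1)\cup\Omega(\Gamma_2)\subset\Omega(\Gamma_3)$ and chosen to run at distance from $\sigma_{_A}$ exceeding $r(\Gamma_1)$ and $r(\Gamma_2)$, then the two ``lens'' regions between $\Gamma_i$ and $\Gamma_3$ lie in $\cD^\fs$ and miss $\sigma_{_{Z_i}}$, so Cauchy's theorem shows that both $Z_1$ and $Z_2$ solve the $\Gamma_3$-equation. The uniqueness ball of Theorem~\ref{ThSolv1} for $\Gamma_3$ has radius $d(\Gamma_3)-\sqrt{\cV_{_K}(\Gamma_3)}>d(\Gamma_3)/2$, so pushing $\Gamma_3$ out far enough makes this exceed both $r(\Gamma_1)$ and $r(\Gamma_2)$ and forces $Z_1=Z_2$.

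Two delicate points must then be discharged. First, one must secure such an enveloping contour $\Gamma_3$ inside the (possibly bounded) domain $\cD^\fs$, reconciling $\cV_{_K}(\Gamma_3)<\tfrac14 d(\Gamma_3)^2$ with the growth of $\|K'\|$ near $\partial\cD$. Second — and this is the cleaner route when no single envelope is available — one should establish the sharper localization $\sigma_{_{Z_{_A}^\fs}}\subset\Omega(\Gamma^\fs)$ directly from the factorization \eqref{roots} and the residue formula \eqref{M1Gresidue}: the latter identifies $Z_{_A}^\fs$ with the operator root of the manifestly contour-independent continuation $N^\fs(z):=M_{_A}(z)+2\pi i\,\fs\,K'(z)$, whence independence of $\Gamma^\fs$ becomes automatic and the deformation obstruction disappears.
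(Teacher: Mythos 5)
Your reduction of the lemma to the contour-independence claim is correct (the bound \eqref{Xr0} then follows by taking the infimum of \eqref{Tmbound} over admissible contours), and the mechanism you identify — Cauchy deformation of $\int_\Gamma K'(\mu)(Z-\mu)^{-1}\,d\mu$, with the end-point singularities handled by excision since $\nu<1$ — is the right one; note the paper itself omits the proof, deferring to Corollary 3.4 of \cite{MM99}, which runs on exactly this mechanism. Your outward-deformation step is fine, and in fact cleaner than you state: writing $Z_i$, $d_i=d(\Gamma_i)$, $r_i=r(\Gamma_i)$ for the data of $\Gamma_i=\Gamma_i^\fs$, every point of $\Omega(\Gamma_3)\setminus\overline{\Omega(\Gamma_i)}$ is separated from $\sigma_{_A}$ by $\Gamma_i$ and hence lies at distance $\geq d_i>2r_i$ from $\sigma_{_A}$, so $Z_i$ solves the $\Gamma_3$-equation for \emph{any} enveloping contour $\Gamma_3\subset\cD^\fs$, no careful choice needed; likewise $d(\Gamma_3)\geq\max(d_1,d_2)$ is automatic, so no ``pushing out'' is required for the radius comparison. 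The genuine gap is the last step: to invoke the uniqueness clause of Theorem~\ref{ThSolv1} for $\Gamma_3$ you need $\Gamma_3$ to satisfy \eqref{OcVKd}, i.e. $\cV_K(\Gamma_3)<\tfrac14 d(\Gamma_3)^2$, and this can fail. The natural envelope $\partial\bigl(\Omega(\Gamma_1)\cup\Omega(\Gamma_2)\bigr)$ only obeys $\cV_K(\Gamma_3)\leq\cV_K(\Gamma_1)+\cV_K(\Gamma_2)<\tfrac14(d_1^2+d_2^2)\leq\tfrac12 d(\Gamma_3)^2$ — off by a factor of $2$ — and enlarging $\Gamma_3$ can only increase $\cV_K$ (and is blocked by $\partial\cD$) while $d(\Gamma_3)$ can never exceed $\dist(\sigma_{_A},\{\alpha,\beta\})$. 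Your fallback via \eqref{M1Gresidue} does not repair this: contour-independence of the continued function $M_{_A}(z)+2\pi i\,\fs\,K'(z)$ does not by itself make its operator roots unique, and the sharper localization $\sigma_{_{Z_1}}\subset\Omega(\Gamma_1)$ is irrelevant to the actual obstruction, which concerns $\sigma_{_{Z_1}}$ versus $\Omega(\Gamma_2)$ (equivalently, versus the region $\dist(z,\sigma_{_A})\leq d_2/2$ where $W_{_A}(\cdot,\Gamma_2)$ is known to be invertible by \eqref{Wm1bound}).

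Two ways to close the hole. (i) The argument of \cite{MM99} applies the Cauchy deformation not to the solutions but to the successive approximations: starting from $Y_0=A$, the iterates of the fixed-point map for $\Gamma_i$ stay in the ball of radius $r_i$ about $A$, so the \emph{common} iterates stay within $\min(r_1,r_2)<\tfrac12\min(d_1,d_2)$ of $A$; their spectra therefore avoid the region between $\Gamma_1$ and $\Gamma_2$ (whose points are at distance $\geq\min(d_1,d_2)$ from $\sigma_{_A}$), whence by induction the iterates for the two contours coincide for every $n$, and so do their limits $Z_1=Z_2$. This bypasses the uniqueness-ball mismatch and needs no envelope. (ii) Alternatively, your factorization idea can be made rigorous: assuming $d_2\leq d_1$, the two continuations coincide on $U=\cO_{d_2/2}(\sigma_{_A})$, both factorizations \eqref{MAGfact} hold there with $W_{_A}(\cdot,\Gamma_i)$ invertible, and $\sigma_{_{Z_2}}\subset U$ since $r_2<d_2/2$; then the function equal to $W_{_A}(z,\Gamma_1)^{-1}W_{_A}(z,\Gamma_2)$ on $U$ and to $(Z_1-z)(Z_2-z)^{-1}$ on the resolvent set of $Z_2$ is entire and tends to $I$ at infinity, so it is $\equiv I$ by Liouville, giving $Z_1=Z_2$. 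Without one of these supplements (or an equivalent), your sketch does not constitute a proof.
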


\noindent The proof of Lemma \ref{HuniqueCor}  almost literally repeats the proof of
Corollary 3.4 in \cite{MM99}. Thus, we omit it, too, as well as the proof of the following

\begin{corollary}
The spectrum of $Z_{_A}^\fs$ lies in the closed complex
$r_0(K)$-neighborhood $\overline{\cO_{r_0(K)}(\sigma_{_A})}$ of the
spectrum $\sigma_{_A}$ of the operator $A$.
\end{corollary}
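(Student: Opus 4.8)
The plan is to convert the norm bound \eqref{Xr0} of Lemma~\ref{HuniqueCor} into the asserted spectral inclusion by a routine resolvent (Neumann series) argument, the crucial input being the self-adjointness of $A$. Writing $Z_{_A}^\fs = A + T^\fs$ as in \eqref{ZAT}, I would first record that $\|T^\fs\| \leq r_0(K)$. I would then fix an arbitrary $z \in \bbC$ with $\dist(z,\sigma_{_A}) > r_0(K)$ and aim to show that $Z_{_A}^\fs - z$ is boundedly invertible; establishing this for every such $z$ immediately gives $\spec(Z_{_A}^\fs) \subset \{z : \dist(z,\sigma_{_A}) \leq r_0(K)\}$.

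For such $z$ one has $z \notin \sigma_{_A}$, so $A - z$ is invertible and I would factor
\begin{equation*}
Z_{_A}^\fs - z = (A-z)\bigl(I + (A-z)^{-1} T^\fs\bigr).
\end{equation*}
Since $A$ is self-adjoint, hence normal, its resolvent satisfies the sharp identity $\|(A-z)^{-1}\| = \dist(z,\sigma_{_A})^{-1}$. Therefore
\begin{equation*}
\|(A-z)^{-1} T^\fs\| \leq \frac{\|T^\fs\|}{\dist(z,\sigma_{_A})} \leq \frac{r_0(K)}{\dist(z,\sigma_{_A})} < 1,
\end{equation*}
so $I + (A-z)^{-1} T^\fs$ is invertible by the Neumann series and, as a product of two invertible operators, so is $Z_{_A}^\fs - z$.

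Finally I would identify the set thus obtained with the closed neighbourhood in the statement. Because $\sigma_{_A}$ is a nonempty compact subset of $\bbR$ and $z \mapsto \dist(z,\sigma_{_A})$ is continuous, the set $\{z : \dist(z,\sigma_{_A}) \leq r_0(K)\}$ coincides with $\overline{\cO_{r_0(K)}(\sigma_{_A})}$: every point lying at distance exactly $r_0(K)$ from $\sigma_{_A}$ is approached, along the segment joining it to a nearest point of $\sigma_{_A}$, by points lying at strictly smaller distance and hence belonging to $\cO_{r_0(K)}(\sigma_{_A})$. This yields $\spec(Z_{_A}^\fs) \subset \overline{\cO_{r_0(K)}(\sigma_{_A})}$.

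I do not expect any genuine obstacle here: the argument is the standard fact that a normal operator perturbed by a bounded $T$ has spectrum within distance $\|T\|$ of the original spectrum. The single place where the hypotheses are truly used is the sharp resolvent bound $\|(A-z)^{-1}\| = \dist(z,\sigma_{_A})^{-1}$, valid precisely because $A$ is normal; it is exactly this identity that transfers the radius $r_0(K)$ from the norm estimate \eqref{Xr0} to the spectral inclusion without loss.
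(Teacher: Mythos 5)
Your proof is correct and is exactly the intended argument: the paper omits the proof of this corollary (deferring to the corresponding statement in \cite{MM99}), but it is meant to follow from the bound \eqref{Xr0} of Lemma \ref{HuniqueCor} combined with precisely the standard perturbation argument you give, where the self-adjointness (normality) of $A$ supplies the sharp resolvent identity $\|(A-z)^{-1}\|=\dist(z,\sigma_{_A})^{-1}$ and the Neumann series then yields invertibility of $Z_{_A}^\fs-z$ whenever $\dist(z,\sigma_{_A})>r_0(K)$. Your closing identification of $\{z\in\bbC \mid \dist(z,\sigma_{_A})\le r_0(K)\}$ with $\overline{\cO_{r_0(K)}(\sigma_{_A})}$ (valid for $r_0(K)>0$, the only case covered by the paper's notation $\cO_r$, $r>0$) completes the statement without any gap.
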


We conclude the section by presenting a factorization result for
$M_{_A}(\cdot,\Gamma^\pm)$. We again skip proof since it follows
exactly the same line as the proof of Theorem 4.1 in \cite{MM99}.

\begin{lemma}
\label{LfactGA} Assume that the hypothesis of Theorem \ref{ThSolv1}
is satisfied and let $Z_{_A}^\fs$, $\fs=\pm1$, be the unique solution to
\eqref{EqMain} referred to in that theorem. Then for
$z\in\bbC\setminus\Gamma^\fs$ the operator-valued function
$M_{_A}(z,\Gamma^\fs)$ admits the following factorization:
\begin{equation}
\label{MAGfact}
M_{_A}(z,\Gamma^\fs)=W_{_A}(z,\Gamma^\fs)(Z_{_A}^\fs -z),
\end{equation}
where $W_{_A}(z,\Gamma^\fs)$ is given by
\begin{equation}
\label{WAzG}
W_{_A}(z,\Gamma^\fs)=I-\int_{\Gamma^\fs}d\mu\, K'(\mu)\frac{1}{\mu-z}(Z_{_A}^\fs-\mu)^{-1}.
\end{equation}
The operator $W_{_A}(z,\Gamma^\fs)$ is bounded, that is,
$W_{_A}(z,\Gamma^\fs)\in\cB(\fH_{_A})$, whenever
$z\in\bbC\setminus\Gamma^\fs$. Moreover, for $\dist(z,\sigma_A)\leq d(\Gamma_l)/2$
this operator is boundedly invertible and
\begin{equation}
\label{Wm1bound}
\|W_{_A}(z,\Gamma^\fs)\|^{-1}\leq \frac{1}{1-\frac{\cV_K(\Gamma_l)}{d(\Gamma^\fs)^2/4}}<\infty.
\end{equation}
\end{lemma}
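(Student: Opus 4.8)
The plan is to establish \eqref{MAGfact} as a pointwise algebraic identity for every $z\in\bbC\setminus\Gamma^\fs$ and then to read off the mapping properties of $W_{_A}(z,\Gamma^\fs)$ from the explicit formula \eqref{WAzG}. First I would insert \eqref{WAzG} into the product $W_{_A}(z,\Gamma^\fs)(Z_{_A}^\fs-z)$ and bring the $\mu$-independent factor $(Z_{_A}^\fs-z)$ under the contour integral. The computational engine is the resolvent identity
\[
\frac{1}{\mu-z}(Z_{_A}^\fs-\mu)^{-1}(Z_{_A}^\fs-z)=\frac{1}{\mu-z}\,I+(Z_{_A}^\fs-\mu)^{-1},
\]
which is legitimate for $\mu\in\Gamma^\fs$ because $\sigma_{_{Z_A}}\cap\Gamma^\fs=\emptyset$. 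Splitting the resulting integral along these two summands produces, on the one hand, the Cauchy-type term $-\int_{\Gamma^\fs}d\mu\,\frac{K'(\mu)}{\mu-z}$, which is exactly the integral appearing in $M_{_A}(z,\Gamma^\fs)$ of \eqref{MAK}, and, on the other hand, the $z$-independent term $-\int_{\Gamma^\fs}d\mu\,K'(\mu)(Z_{_A}^\fs-\mu)^{-1}$. By the defining equation \eqref{EqMain}, this contour integral is precisely the deviation $T^\fs=Z_{_A}^\fs-A$ of \eqref{ZAT}; substituting it cancels the surviving operator part and leaves exactly $A-z-\int_{\Gamma^\fs}d\mu\,\frac{K'(\mu)}{\mu-z}=M_{_A}(z,\Gamma^\fs)$. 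The factorization is thus purely formal once the integrals are known to converge.

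Second, I would justify convergence and boundedness. Along the piecewise smooth contour $\Gamma^\fs$ the only possible trouble is at the endpoints $\alpha,\beta$, where \eqref{Ksing1} gives $\|K'(\mu)\|\le c\,|\mu-\alpha|^{-\nu}$ and $\le c\,|\mu-\beta|^{-\nu}$ with $0\le\nu<1$; since $\Gamma^\fs$ has finite length this makes $\cV_{_K}(\Gamma^\fs)=\int_{\Gamma^\fs}|d\mu|\,\|K'(\mu)\|$ finite. The factor $1/|\mu-z|$ is bounded by $1/\dist(z,\Gamma^\fs)<\infty$ for each fixed $z\notin\Gamma^\fs$, so it remains to control $\|(Z_{_A}^\fs-\mu)^{-1}\|$ uniformly for $\mu\in\Gamma^\fs$. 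Here lies the only real technical point: $Z_{_A}^\fs=A+T^\fs$ is a non-self-adjoint (non-normal) perturbation of the self-adjoint $A$, so I cannot simply invoke $\|(Z_{_A}^\fs-\mu)^{-1}\|\le 1/\dist(\mu,\sigma_{_{Z_A}})$. Instead I would write $Z_{_A}^\fs-\mu=(A-\mu)\bigl(I+(A-\mu)^{-1}T^\fs\bigr)$ and use that, for $\mu\in\Gamma^\fs$, $\|(A-\mu)^{-1}\|\le 1/d(\Gamma^\fs)$ while $\|T^\fs\|\le r(\Gamma^\fs)\le d(\Gamma^\fs)/2$ by \eqref{Tmbound}. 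Then $\|(A-\mu)^{-1}T^\fs\|\le r(\Gamma^\fs)/d(\Gamma^\fs)<1$, the Neumann series converges, and one obtains the uniform bound $\|(Z_{_A}^\fs-\mu)^{-1}\|\le\bigl(d(\Gamma^\fs)-r(\Gamma^\fs)\bigr)^{-1}\le 2/d(\Gamma^\fs)$. Combined with the finiteness of $\cV_{_K}(\Gamma^\fs)$, this shows the integral in \eqref{WAzG} is norm-convergent, so $W_{_A}(z,\Gamma^\fs)\in\cB(\fH_{_A})$ for every $z\in\bbC\setminus\Gamma^\fs$.

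Third, for the invertibility claim I restrict to $\dist(z,\sigma_{_A})\le d(\Gamma^\fs)/2$ and estimate $\|W_{_A}(z,\Gamma^\fs)-I\|$. For such $z$ and any $\mu\in\Gamma^\fs$ I would show $|\mu-z|\ge d(\Gamma^\fs)/2$: taking $a_0\in\sigma_{_A}$ nearest to $z$, one has $|\mu-z|\ge|\mu-a_0|-|a_0-z|\ge d(\Gamma^\fs)-d(\Gamma^\fs)/2$. Together with the resolvent bound $\|(Z_{_A}^\fs-\mu)^{-1}\|\le 2/d(\Gamma^\fs)$ from the previous step, the integrand in \eqref{WAzG} is dominated by $\bigl(4/d(\Gamma^\fs)^2\bigr)\|K'(\mu)\|$, whence
\[
\|W_{_A}(z,\Gamma^\fs)-I\|\le\frac{4\,\cV_{_K}(\Gamma^\fs)}{d(\Gamma^\fs)^2}=\frac{\cV_{_K}(\Gamma^\fs)}{d(\Gamma^\fs)^2/4}<1,
\]
the strict inequality being exactly the hypothesis \eqref{cVKd}. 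The Neumann series for the inverse then converges and yields the stated bound \eqref{Wm1bound}. The main obstacle throughout is the resolvent estimate for the non-normal operator $Z_{_A}^\fs$ in the second step; every other ingredient (the resolvent identity, the endpoint integrability from \eqref{Ksing1}, and the final Neumann argument) is routine once that uniform bound is in hand.
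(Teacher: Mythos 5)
Your strategy is the intended one --- the paper omits this proof precisely because it ``follows exactly the same line as the proof of Theorem 4.1 in \cite{MM99}'', and that proof is the resolvent-identity computation you describe. Your second and third paragraphs are also correct and essentially complete: writing $Z_{_A}^\fs-\mu=(A-\mu)\bigl(I+(A-\mu)^{-1}T^\fs\bigr)$ with $\|(A-\mu)^{-1}\|\le 1/d(\Gamma^\fs)$ and $\|T^\fs\|\le r(\Gamma^\fs)\le d(\Gamma^\fs)/2$ is the right way to get the uniform bound $\|(Z_{_A}^\fs-\mu)^{-1}\|\le 2/d(\Gamma^\fs)$ for the non-normal operator $Z_{_A}^\fs$, and your Neumann-series estimate $\|W_{_A}(z,\Gamma^\fs)-I\|\le\cV_{_K}(\Gamma^\fs)/(d(\Gamma^\fs)^2/4)<1$ then gives \eqref{Wm1bound}.

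The problem is in the first paragraph: the central cancellation, as you state it, does not happen. Your resolvent identity yields
\[
W_{_A}(z,\Gamma^\fs)(Z_{_A}^\fs-z)=(Z_{_A}^\fs-z)-\int_{\Gamma^\fs}d\mu\,\frac{K'(\mu)}{\mu-z}-\int_{\Gamma^\fs}d\mu\,K'(\mu)(Z_{_A}^\fs-\mu)^{-1},
\]
and you then assert, citing \eqref{EqMain} as printed, that $-\int_{\Gamma^\fs}d\mu\,K'(\mu)(Z_{_A}^\fs-\mu)^{-1}=T^\fs=Z_{_A}^\fs-A$. Substituting that value gives $A+2T^\fs-z-\int_{\Gamma^\fs}d\mu\,K'(\mu)/(\mu-z)$, which differs from $M_{_A}(z,\Gamma^\fs)$ by $2T^\fs$; so your two claims, ``the $z$-independent term equals $+T^\fs$'' and ``it cancels'', are incompatible. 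What the factorization actually requires is the opposite identity $\int_{\Gamma^\fs}d\mu\,K'(\mu)(Z_{_A}^\fs-\mu)^{-1}=Z_{_A}^\fs-A$, i.e.\ the fixed-point equation with a plus sign, $Z_{_A}=A+\int_{\Gamma}d\mu\,K'(\mu)(Z_{_A}-\mu)^{-1}$. That plus-sign equation is in fact the correct form of \eqref{EqMain}: it is exactly what $Z_{_A}^\fs=A+B_{_\Gamma}X_{_\Gamma}$ of \eqref{ZABX} satisfies by \eqref{XeqG} and \eqref{BEC}, and it is the deformed analogue of the undeformed chain \eqref{Za}, \eqref{XZ}, \eqref{WADx}, \eqref{roots}, where $Z_{_A}=A+\int B\sE_{_D}(d\mu)C(Z_{_A}-\mu)^{-1}$ and the factorization \eqref{roots} demonstrably holds. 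The minus sign printed in \eqref{EqMain} (and repeated in \eqref{ZGeq}) is evidently a typo of the paper, but a proof of this lemma cannot leave the discrepancy unresolved: with the printed sign the lemma is false, while with the corrected sign your computation closes. You needed to detect the inconsistency, determine which sign is correct (as above), and only then perform the cancellation; as written, the key step of your proof is an algebraic non sequitur.
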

Note that finiteness of the bound \eqref{Wm1bound} follows from the
assumption \eqref{OcVKd}. Having compared  \eqref{WAzG} with
\eqref{WADx}, one may view the factorization result \eqref{MAGfact}
as a direct analog of the factorization \eqref{roots}.

\section{{Complex deformation of the block
operator matrix and
solvability\newline of the deformed Riccati equation}}
\label{Sscal}

In this section by $L$ we will understand an extension of one the
two celebrated Friedrichs models in \cite{Fried}, namely
the one discussed in \cite[Section 6]{Fried}. We assume that
$L$ is a $2\times2$ block operator matrix of the form \eqref{L}
where, from the very beginning, the entry $D$ is given in the spectral
representation and, thus, it reads as the operator of multiplication by
the independent variable. That is,
\begin{equation}
\label{D}
{(Df_{_D})(\lambda)=\lambda f_{_D}(\lambda)}, \quad
f_{_D}\in\fH_{_D}=L^2(\Delta\to\fh)\quad
\bigl(\Delta=(\alpha,\beta)\subset\bbR\bigr),
\end{equation}
where $\fh$ is an auxiliary Hilbert space and $L^2(\Delta\to\fh)$ is
formed by functions $f_{_D}$ that map $\Delta$ to $\fh$ and are such
that the $\fh$-norm $\|f(\lambda)\|_\fh$ is Lebesgue measurable and
square-integrable over $\Delta$,
\begin{equation}
\|f\|_{\fH_{_D}}:=\left(\int_\alpha^\beta d\lambda\|f(\lambda)\|^2_\fh\right)^{1/2}<\infty.
\end{equation}
The inner product in $\fH_{_D}=L^2(\Delta\to\fh)$ is defined by
\begin{equation}
\lal f,g\ral_{\fH_{_D}}= \int_\alpha^\beta d\lambda \lal f(\lambda),g(\lambda) \ral_{\fh},\quad
f,g\in\fH_{_D},
\end{equation}
where $\lal\cdot,\cdot\ral_{\fh}$ stands for the inner product in $\fh$.

The above definition of $D$ means that all its spectrum consists of
the single branch of the absolutely continuous spectrum that
uniformly covers the interval $[\alpha,\beta]$. We make no
specification of the entry $A$ except for that it is self-adjoint
and its spectrum is embedded into the interior of $\sigma_{_D}$, i.e. the inclusion
\eqref{Feshbach} holds, $\sigma_{_A}\subset(\alpha,\beta)$.
Necessarily, the coupling operator $C:\fH_{_A}\to\fH_{_D}$ acts as follows:
\begin{equation}
\label{C}
{(Cf_{_A})(\lambda)=\fc(\lambda)f_{_A}, \quad f_{_A}\in\fH_{_A}},
\end{equation}
with some $\cB(\fH_{_A},\fh)$-valued (for a.e.
$\lambda\subset\Delta$) function $\fc(\lambda)$. Also we assume that
\begin{equation}
\label{B}
{Bf_{_D}=\int_\Delta d\lambda\,\, \fb(\lambda)f_{_D}(\lambda)},
\end{equation}
with a $\cB(\fh,\fH_{_A})$-valued function $\fb(\lambda)$, $\lambda\in(\alpha,\beta)$.

Surely, if $L$ is self-adjoint then $C=B^*$ and, necessarily,
$\fc(\lambda)=\fb(\lambda)^*$ for a.e. $\lambda\in\Delta$. Notice
that a self-adjoint block operator matrix $L$ of the form \eqref{L}
with the entries $D$ and $B$ given by \eqref{D} and \eqref{B},
respectively, and $C=B^*$, but with the entry $A$ only having point spectrum was
discussed in \cite[Section 8]{MM99}.

Main assumption of the present section is the following hypothesis.
\begin{hypothesis}
\label{Hypo1} Assume that the operator-valued functions
$\fb:\,\Delta\to\cB(\fh,\fH_{_A})$ and
$\fc:\,\Delta\to\cB(\fH_{_A},\fh)$ are real analytic on the interval
$\Delta=(\alpha,\beta)\subset\bbR$ and admit analytic continuation from
$\Delta$ onto a domain $\cD\subset\bbC$, $\cD\supset\Delta$ (and
$\cD\not\supset\{\alpha,\beta\}$). Let $\cD^-=\cD\cap\bbC^-$ and
$\cD^+=\cD\cap\bbC^+$. Also assume that the following bounds hold:
\begin{subequations}
    \label{bsing1}
\begin{align}
\tag{\ref{bsing1}\,\textit{a}}\label{bsing1:a}
\|\fb(\mu)\|_{\cB(\fh,\fH_{_A})} & \leq c |\mu-\alpha|^{-\nu_{\fb}}\quad\text{and}\quad
\|\fc(\mu)\|_{\cB(\fH_{_A},\fh)} \leq c |\mu-\alpha|^{-\nu_{\fc}}\\
\nonumber
&\quad
\text{for any $\mu\neq\alpha$ lying in some open complex neighborhood of $\alpha$ in $\cD$},\\
\tag{\ref{bsing1}\,\textit{b}}\label{bsing1:b}
\|\fb(\mu)\|_{\cB(\fh,\fH_{_A})} & \leq c |\mu-\beta|^{-\nu_{\fb}} \quad\text{and}\quad
\|\fc(\mu)\|_{\cB(\fH_{_A},\fh)} \leq c |\mu-\beta|^{-\nu_{\fc}}\\
\nonumber
&\quad \text{for any $\mu\neq\beta$ lying in some open complex neighborhood of $\beta$ in $\cD$},
\end{align}
\end{subequations}
where $c$, $\nu_{\fb}$, and  $\nu_{\fc}$ are some constants, $c>0$ and $0\leq\nu_{\fb}<1/2$, $0\leq\nu_{\fc}<1/2$.
\end{hypothesis}

Under Hypothesis \ref{Hypo1}, let us consider various piecewise
smooth Jordan contours $\Gamma$ with fixed real end points $\alpha$ and
$\beta$, $\alpha<\beta$, obtained by continuous deformation from the interval
$\Delta=(\alpha,\beta)$ and lying either  completely  in $\cD^-\cup\Delta$ or
completely in $\cD^+\cup\Delta$. With every such a contour $\Gamma$ we associate
the Hilbert space $\fH_{_{D,\Gamma}}:=L^2(\Gamma\to\fh)$ formed by functions
$f_{_{D,\Gamma}}:\Gamma\to\fh$ that are square-integrable with respect to the
Lebesgue measure $|d\lambda|$ on $\Gamma$, that is, the inner product in
$\fH_{_{D,\Gamma}}$ is defined by
\begin{equation}
\lal f,g\ral_{\fH_{_{D,\Gamma}}}=\int_\Gamma |d\lambda| \lal f(\lambda),g(\lambda) \ral_{\fh}.
\end{equation}
Surely, the Hilbert space $\fH_{_D}$ is a particular case of
$\fH_{_{D,\Gamma}}$ for $\Gamma=\Delta$.
Then one introduces the following family of
block operator matrices:
\begin{equation} \label{LG}
L_{_\Gamma}=\left(\begin{array}{cc}
A & B_{_\Gamma} \\
C_{_\Gamma} & D_{_\Gamma}
\end{array}\right),
\end{equation}
where $D_{_\Gamma}$ is the operator of multiplication by the
independent variable $\lambda\in\Gamma$ in $L^2(\Gamma\to\fh)$, i.e.
\begin{equation}
\label{DG}
{(D_{_\Gamma}f_{_{D,\Gamma}})(\lambda)=\lambda f_{_{D,\Gamma}}(\lambda)}, \quad
f_{_{D,\Gamma}}\in\fH_{_{D,\Gamma}};
\end{equation}
the entry {$C_{_\Gamma}:\fH_{_A}\to\fH_{D,\Gamma}$} is defined as
\begin{equation}
\label{CG}
(C_{_\Gamma} f_{_A})(\lambda)=\fc(\lambda)f_{_A}, \quad f_{_A}\in\fH_{_A}, \quad \lambda\in\Gamma,
\end{equation}
and the entry $B_\Gamma:\fH_{D,\Gamma}\to\fH_{_A}$ as
\begin{equation}
\label{BG}
B_{_\Gamma} f_{_{D,\Gamma}}=\int_\Gamma d\lambda\,\, \fb(\lambda)f_{_{D,\Gamma}}(\lambda),\quad
f_{_{D,\Gamma}}\in\fH_{_{D,\Gamma}}.
\end{equation}
\begin{remark}
\label{DGnormal} Since the entry $D_{_\Gamma}$, defined by
\eqref{DG}, is the operator of multiplication by the independent
variable, it is normal, i.e. it satisfies
$D_{_\Gamma}^*D_{_\Gamma}=D_{_\Gamma}D_{_\Gamma}^*$. The spectrum
$\sigma_{D_\Gamma}$ of $D_{_\Gamma}$ is absolutely continuous and
occupies the closure $\overline{\Gamma}=\Gamma\cup\{\alpha,\beta\}$
of the contour $\Gamma$. The spectral measure $\sE_{D_{_\Gamma}}$ of the operator
$D_{_\Gamma}$ is given by
\begin{equation}
\label{EDGchi}
\bigl(\sE_{D_{_\Gamma}}(\delta)f\bigr)(\lambda)=\chi_{\delta}(\lambda)f(\lambda),\quad
\lambda\in\Gamma,\quad f\in\fH_{_{D,\Gamma}}=L^2(\Gamma\to\fh),
\end{equation}
where $\delta$ is an arbitrary Borel subset of $\overline{\Gamma}$
and $\chi_{\delta}$ denotes the characteristic function of $\delta$:
$\chi_{\delta}(\lambda)=1$ if $\lambda\in\delta$ and
$\chi_{\delta}(\lambda)=0$ if $\lambda\in\overline{\Gamma}\setminus\delta$.
\end{remark}
\begin{remark}
Unlike the spectra of $A$ and (original) $D$, the spectra of
$A$ and $D_{_\Gamma}$ are disjoint,
\begin{equation}
\label{dGd}
d(\Gamma)=\dist(\sigma_{_A},\sigma_{_{D_\Gamma}})=\dist(\sigma_{_A},\Gamma)>0,
\end{equation}
whenever $\Gamma\cap\Delta=\emptyset$.
\end{remark}
We interpret $L_{_\Gamma}$  defined by \eqref{LG}--\eqref{BG} as the
result of the \emph{complex deformation} of the original
operator $L=L_{\Delta}$ corresponding to $\Gamma=\Delta$. Note
that only the main-diagonal entry $D$
is varied while the other main-diagonal entry $A$ remains unchanged.
Similarly, the operator Riccati equations
\begin{align}
\label{RicAG}
{XA-D_{_\Gamma}X+XB_{_\Gamma}X}&{=C_{_\Gamma}},
\quad X\in\cB(\fH_{_A},\fH_{_{D_\Gamma}}),\\
\label{RicDG} {YD_{_\Gamma}-AY+YC_{_\Gamma}Y}&{=B_{_\Gamma}}, \quad
Y\in\cB(\fH_{_{D_\Gamma}},\fH_{_A})
\end{align}
associated with the block operator matrix $L_{_\Gamma}$ are called
the \emph{complexly deformed} Riccati equations. The deformation
is viewed as the one with respect to the original Riccati equations
\eqref{RicAG}, \eqref{RicDG} for $\Gamma=\Delta$.

As in Section \ref{sFactor}, for $\Gamma=\Gamma^-\subset\cD^-$ or
$\Gamma=\Gamma^+\subset\cD^+$,  we again denote by $\Omega(\Gamma)$
the domain lying inside the closed curve formed by the interval
$\Delta$ and contour $\Gamma$. Surely
$\Omega(\Gamma^\pm)\subset\cD^\pm$. When talking on the operator
$L_{_\Gamma}$, by resonances one understands the part of the point
spectrum $\sigma_p(L_{_\Gamma})$ of $L_{_\Gamma}$ lying in
$\Omega(\Gamma)$. The next lemma shows that the resonances lying in
the intersection of the domains $\Omega(\Gamma^\fs)$ for several
various contours $\Gamma^\fs$ with the same $l$ are common for the
respective operators $L_{\Gamma^\fs}$. This property is nothing but
the analog of the independence of resonances on the scaling
parameter in the standard complex scaling approach (see, e.g.,
\cite[Section XIII.10]{RS-III} and references therein).

\begin{lemma}
\label{dspecG}
Let the assumptions of Hypothesis \ref{Hypo1} hold.
Assume that $\Gamma^\fs_1$ and $\Gamma^\fs_2$, $\fs=\pm1$,
are piecewise smooth Jordan contours with the end points $\alpha$
and $\beta$, obtained by continuous deformation from $\Delta$ and
lying completely  in $\cD^\fs\cup\Delta$. Let $L_{_{\Gamma_j^\fs}}$,
$j=1,2$, be operators defined for the respective contours
$\Gamma_j^\fs$  by \eqref{LG}--\eqref{BG}. Then
$z\in\Omega(\Gamma^\fs_1)\cap\Omega(\Gamma^\fs_2)$ belongs to the point
spectrum $\sigma_p(L_{_{\Gamma_1^\fs}})$ of $L_{_{\Gamma_1^\fs}}$ if and
only if $z\in\sigma_p(L_{_{\Gamma_2^\fs}})$. Furthermore, if
$z\in\sigma_p(L_{_{\Gamma_1^\fs}})\cap\bigl(\bbC\setminus
\overline{\Omega(\Gamma_1^\fs)}\bigr)$ then $\lambda\in\sigma_p(L)$,
where $L=L_{_\Delta}$ is the original non-deformed operator block matrix
\eqref{LG} with $\Gamma=\Delta$.
\end{lemma}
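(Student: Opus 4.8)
The plan is to characterize the point spectrum of each deformed matrix $L_{_\Gamma}$ through the associated Schur complement and then to read off contour independence from the residue identity \eqref{M1Gresidue}. First I would establish the Feshbach reduction: for any admissible contour $\Gamma$ and any $z\in\bbC\setminus\overline{\Gamma}$, so that $z\notin\sigma_{_{D_\Gamma}}=\overline{\Gamma}$ (Remark \ref{DGnormal}) and the resolvent $(D_{_\Gamma}-z)^{-1}$ is bounded, one has $z\in\sigma_p(L_{_\Gamma})$ if and only if $\ker M_{_A}(z,\Gamma)\neq\{0\}$, where $M_{_A}(z,\Gamma)=A-z-B_{_\Gamma}(D_{_\Gamma}-z)^{-1}C_{_\Gamma}$. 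Writing the eigenvalue equation $(L_{_\Gamma}-z)(u\oplus v)=0$ as the system $(A-z)u+B_{_\Gamma}v=0$, $C_{_\Gamma}u+(D_{_\Gamma}-z)v=0$, I would solve the second equation for $v=-(D_{_\Gamma}-z)^{-1}C_{_\Gamma}u$ and substitute into the first to obtain $M_{_A}(z,\Gamma)u=0$. The component $u$ cannot vanish, since $u=0$ forces $v=0$ through invertibility of $D_{_\Gamma}-z$; conversely, any nonzero $u\in\ker M_{_A}(z,\Gamma)$ yields a genuine eigenvector $u\oplus\bigl(-(D_{_\Gamma}-z)^{-1}C_{_\Gamma}u\bigr)$.

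Second, I would identify this Schur complement with the continued function of \eqref{MAK}. A direct computation using \eqref{DG}--\eqref{BG} gives $B_{_\Gamma}(D_{_\Gamma}-z)^{-1}C_{_\Gamma}=\int_\Gamma d\lambda\,\frac{\fb(\lambda)\fc(\lambda)}{\lambda-z}$, while from \eqref{Kdef} and the multiplication-operator form of the spectral function of $D$ one reads off $K(\mu)=\int_\alpha^\mu d\lambda\,\fb(\lambda)\fc(\lambda)$, hence $K'(\mu)=\fb(\mu)\fc(\mu)$. Thus the Schur complement of $L_{_\Gamma}$ coincides exactly with $M_{_A}(z,\Gamma)$ of \eqref{MAK}; the integral converges for $z\notin\overline{\Gamma}$ because the endpoint singularities of $\fb\fc$ are controlled by $\nu_{\fb}+\nu_{\fc}<1$ under Hypothesis \ref{Hypo1}, while $\dist(z,\overline{\Gamma})>0$ keeps $\frac{1}{\lambda-z}$ bounded on $\Gamma$.

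Third, the contour independence now follows from \eqref{M1Gresidue}. For $z\in\Omega(\Gamma_1^\fs)\cap\Omega(\Gamma_2^\fs)$ the residue identity gives $M_{_A}(z,\Gamma_1^\fs)=M_{_A}(z)+2\pi i\,\fs\,K'(z)=M_{_A}(z,\Gamma_2^\fs)$, so the two operators are literally equal and therefore have the same kernel; since $\Omega(\Gamma_j^\fs)$ is disjoint from $\overline{\Gamma_j^\fs}$, the Feshbach equivalence applies to both matrices and proves the first assertion. For the second assertion, if $z\in\sigma_p(L_{_{\Gamma_1^\fs}})$ lies in $\bbC\setminus\overline{\Omega(\Gamma_1^\fs)}$, then $z\notin\overline{\Gamma_1^\fs}$ and $z\notin\overline{\Delta}$, the residue identity reduces to $M_{_A}(z,\Gamma_1^\fs)=M_{_A}(z)$, and the Feshbach equivalence applied to $L=L_{_\Delta}$ (legitimate because $z\notin\sigma_{_D}=\overline{\Delta}$) turns $\ker M_{_A}(z)\neq\{0\}$ into $z\in\sigma_p(L)$.

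I expect the only genuine work to be the Feshbach reduction of the first step. The points requiring care there are ruling out the degenerate possibility of an eigenvector with vanishing $\fH_{_A}$-component and keeping $z$ strictly off $\sigma_{_{D_\Gamma}}=\overline{\Gamma}$ so that $(D_{_\Gamma}-z)^{-1}$ exists; once this equivalence is secured, the remainder is a direct reading of \eqref{M1Gresidue} and involves no estimates.
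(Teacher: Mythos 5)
Your proposal is correct, but it follows a genuinely different route from the paper's own proof. The paper argues directly with eigenvectors: given $L_{_{\Gamma_1}}f=zf$, it solves the second component equation to get $f_{_D}(\lambda)=\frac{1}{\lambda-z}\fc(\lambda)f_{_A}$, observes that this formula analytically continues $f_{_D}$ to all of $\cD\setminus\{z\}$, and then applies Cauchy's theorem to the integral $B_{_{\Gamma_1}}f_{_D}=\int_{\Gamma_1}d\mu\,\frac{\fb(\mu)\fc(\mu)}{\mu-z}f_{_A}$ to deform $\Gamma_1$ into $\Gamma_2$ (or into $\Delta$ for the second assertion) without crossing $z$; restricting the continued $f_{_D}$ to $\Gamma_2$ then exhibits an explicit eigenvector of $L_{_{\Gamma_2}}$. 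You instead reduce membership in the point spectrum to nontriviality of the kernel of the Schur complement (the Feshbach step, including the correct treatment of the degenerate case $u=0$), identify that Schur complement with $M_{_A}(\cdot,\Gamma)$ of \eqref{MAK} via $K'=\fb\fc$ --- which is precisely the identification \eqref{Mcoin} that the paper itself only establishes \emph{after} the lemma --- and then read off contour independence from the residue identity \eqref{M1Gresidue}, under which $M_{_A}(z,\Gamma_1^\fs)$ and $M_{_A}(z,\Gamma_2^\fs)$ are literally equal on $\Omega(\Gamma_1^\fs)\cap\Omega(\Gamma_2^\fs)$. The comparison: your argument is more modular, hides the contour deformation inside the quoted identity \eqref{M1Gresidue} (itself proved in \cite{MM99} by exactly the Cauchy-theorem argument the paper redoes by hand), and yields the second assertion as a biconditional rather than a one-way implication; the paper's argument is self-contained at that point of the text, needs neither \eqref{M1Gresidue} nor the Schur-complement formalism, and is constructive in that it shows the eigenvector of $L_{_{\Gamma_2}}$ is the analytic continuation of the eigenvector of $L_{_{\Gamma_1}}$ --- the fact that resonance eigenfunctions for different admissible contours are restrictions of one analytic $\fh$-valued function, which is the structural point behind formulas \eqref{Xx}--\eqref{fyl} later in the section.
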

\begin{proof}
Suppose that
\begin{equation}
\label{zLzO}
z\in\sigma_p(L_{_{\Gamma_1}})\quad\text{and}\quad z\in\Omega(\Gamma_1)\cap\Omega(\Gamma_2),
\end{equation}
where
$\Gamma_1=\Gamma_1^\fs$ and $\Gamma_2=\Gamma_2^\fs$ for a certain value of $l$, $\fs=\pm1$.
Let $f\neq 0$ be an eigenvector of $L_{_{\Gamma_1}}$ belonging to
the eigenvalue $z$, $f=(f_{_A},f_{_D})$ with $f_{_A}\in\fH_{_A}$ and $f_{_D}\in\fH_{_{D_{\Gamma_1}}}$.
Equation $L_{_{\Gamma_1}}f=zf$ is equivalent to
\begin{align}
\label{eqbs1}
(A-z)f_{_A}+B_{_{\Gamma_1}}f_{_D}&=0,\\
\label{eqbs2}
C_{_{\Gamma_1}}f_{_A}+(D_{_{\Gamma_1}}-z)f_{_D}&=0.
\end{align}
Taking into account \eqref{DG} and \eqref{CG}, from \eqref{eqbs2} we
obtain for $\lambda\in\Gamma_1$ (and automatically $\lambda\neq z$)
\begin{equation}
\label{fD}
f_{_D}(\lambda)=\frac{1}{\lambda-z}\fc(\lambda)f_{_A}.
\end{equation}
Initially, the formula $\eqref{fD}$ only works for
$\lambda\in\Gamma_1$. But, except for the point $z$, this formula
may be used to make an extension of $f_{_D}$ through the whole
domain where the $\cB(\fH_{_A},\fh)$-valued function $\fc$ is
defined and analytic. Then, under Hypothesis \ref{Hypo1} (which is
assumed) the extended $\fh$-valued function \eqref{fD} is well
defined and analytic in $\lambda\in\cD=\cD^-\cup\cD^+\cup\Delta$
except for the point $z$. Moreover, the following equality holds
\begin{equation}
\label{fDp} \fc(\lambda)f_{_A}+(\lambda-z)f_{_D}(\lambda)=0\quad
\text{for any }\,\lambda\in{\cD\setminus\{z\}}.
\end{equation}
At the same time, in view of \eqref{fD} the term
$B_{_{\Gamma_1}}f_{_D}$ on the left-hand side of \eqref{eqbs2} reads
\begin{equation}
\label{fA}
B_{_{\Gamma_1}}f_{_D}=\int_{\Gamma_1} d\mu \frac{\fb(\mu)\fc(\mu)}{\mu-z}f_{_A}.
\end{equation}
Since the function under the integration sign on the right-hand side
of \eqref{fA} is holomorphic in $\mu\in\cD\setminus\{z\}$, the
contour $\Gamma_1$ may be replaced, with no change in the integral
value, by any other piecewise continuous Jordan contour
$\Gamma\subset\cD^\fs$ obtained from $\Gamma_1$ by continuous
deformation without crossing the point $z$. In particular, since by the assumption
$z\in\Omega(\Gamma_1)\cap\Omega(\Gamma_2)$, the contour $\Gamma_2$
may be chosen for such a purpose and then the equality \eqref{eqbs1}
arises with $\Gamma_1$ replaced by $\Gamma_2$. Furthermore,
restricting \eqref{fDp} to $\lambda\in\Gamma_2$ results in
equality \eqref{eqbs2} rewritten for $L_{_{\Gamma_2}}$. Thus, we
have showed that \eqref{zLzO} implies
$z\in\sigma_p(L_{_{\Gamma_2}})$. Interchanging the roles of
$\Gamma_1$ and $\Gamma_2$ proves the converse implication and, thus,
completes the proof of the first statement of the lemma.

The remaining statement is proven in the same way by the observation
that for $z\in\sigma_p(L_{_{\Gamma_1}})\cap\bigl(\bbC\setminus
\overline{\Omega(\Gamma_1)}\bigr)$ one can equivalently replace in
\eqref{fA} integration over $L_{_{\Gamma_1}}$ by integration over
$\Delta$. In its turn, the equality \eqref{fDp} is also reduced to
$\lambda\in\Delta$. Thus we conclude that, in this case,
$z\in\sigma_p(L_{_{\Gamma_1}})$ implies $z\in\sigma_p(L)$,
completing the whole proof.
\end{proof}

Now let us consider the Schur complement
\begin{align}
\label{MAEG}
M_{_{A,\Gamma}}(z):=& A-z-B_{_\Gamma}(D_{_\Gamma}-z)^{-1}C_{_\Gamma}\\
=&A-z-\int_{\sigma_{_{D_\Gamma}}} B_{_\Gamma}\sE_{_{D_\Gamma}}(d\mu)C_{_\Gamma}\frac{1}{\mu-z}, \quad
z\not\in\sigma_{_{D_\Gamma}}=\overline{\Gamma},
\end{align}
corresponding to the block operator matrix $L_{_\Gamma}$. It is
straightforward to see that in the case under consideration
\begin{equation}
\label{BEC}
B_{_\Gamma}\sE_{_{D_\Gamma}}(d\mu)C_{_\Gamma}=\fb(\mu)\fc(\mu)\,d\mu
\end{equation}
and, thus,
\begin{align}
\label{MAEG1}
M_{_{A,\Gamma}}(z):=A-z-\int_\Gamma d\mu \frac{\fb(\mu)\fc(\mu)}{\mu-z}, \quad
z\in\bbC\setminus\overline{\Gamma},
\end{align}
Furthermore, the corresponding operator-valued function $K$ defined on $\bbR$ by \eqref{Kdef},
for $\mu\in\overline{\Delta}$ reads as
\begin{equation}
\label{Kbc}
K(\mu)=\int_\alpha^\mu d\lambda\,\, \fb(\lambda)\fc(\lambda), \quad \alpha\leq\mu\leq\beta.
\end{equation}
Under the Hypothesis \ref{Hypo1} the function $K$ admits an explicit analytic continuation onto the
domain $\cD$ simply by the formula
\begin{equation}
\label{Kbcg}
K(\mu)=\int_{\gamma(\alpha,\mu)} d\lambda\,\, \fb(\lambda)\fc(\lambda), \quad \mu\in\cD,
\end{equation}
where $\gamma(\alpha,\mu)$ stands for arbitrary piecewise Jordan contour having the ends
$\alpha$, $\mu$ and lying, except for the end point $\alpha$, completely in $\cD$. Therefore,
the derivative $K'(\lambda)$ is nothing but
\begin{equation}
\label{Kder}
{K'(\lambda)=\fb(\lambda)\fc(\lambda)}, \quad \lambda\in\cD.
\end{equation}
This means that the Schur complement $M_{_{A,\Gamma}}(\cdot)$ corresponding to $L_{_\Gamma}$
merely simply coincides with the function $M_{_A}(\cdot,\Gamma)$ defined in \eqref{MAK},
\begin{equation}
\label{Mcoin}
M_{_{A,\Gamma}}(z)=M_{_A}(z,\Gamma),\quad z\in\bbC\setminus\overline{\Gamma}.
\end{equation}

\begin{remark}
It is worth noting that the analyticity of the operator-valued
function $K'$ does not imply the analyticity of $\fb$ and $\fc$
(and, thus, in general it does not suggests the opportunity to
perform the complex transformation of $L$ of the type we did in this
section). This is seen from the two following elementary examples.
\end{remark}
\begin{example}
\label{ex1}
Let $\alpha=-1$, $\beta=1$, and $\fH_{_A}=\fh=\bbC$. Suppose that $\fb(\lambda)=\lambda-i$ and
$\fc(\lambda)=\frac{1}{\lambda-i}$, $\lambda\in(-1,1)$. Clearly, the product
$\fb(\lambda)\fc(\lambda)\equiv 1$ admits analytic continuation from the interval $(-1,1)$
to the whole complex plane $\bbC$ while the function $\fc$ isn't.
\end{example}
\begin{example}
\label{ex2} Let $\alpha$, $\beta$, $\fH_{_A}$, and $\fh$ be as in
Example \ref{ex1}. Suppose that
$\fb(\lambda)=\fc(\lambda)=|\lambda|$, $\lambda\in(-1,1)$. Clearly,
the product $\fb(\lambda)\fc(\lambda)\equiv \lambda^2$ admits
analytic continuation from the interval $(-1,1)$ to the whole
complex plane $\bbC$ while none of the functions $\fb$ and $\fc$ is
real analytic at the point $\lambda=0$.
\end{example}

Now we notice that, due to \eqref{EDGchi}, the
$\sE_{_{D_\Gamma}}$-norms of the operators $B_{_\Gamma}$ and
$C_{_\Gamma}$ (see Definition \ref{DefEN}) read as
\begin{align}
\label{BGEN}
\|B_{_\Gamma}\|_{\sE_{_{D_\Gamma}}}=&
\left(\int_\Gamma |d\mu|\, \|\fb(\mu)\fb(\mu)^*\|_{\cB(\fh)}\right)^{1/2}=
\left(\int_\Gamma |d\mu|\, \|\fb(\mu)\|^2_{\cB(\fh,\fH_{_A})}\right)^{1/2},\\
\label{CGEN}
\|C_{_\Gamma}\|_{\sE_{_{D_\Gamma}}}=& \left(\int_\Gamma |d\mu|\,
\|\fc(\mu)^*\fc(\mu)\|_{\cB(\fH_{_A})}\right)^{1/2}= \left(\int_\Gamma
|d\mu|\, \|\fc(\mu)\|^2_{\cB(\fH_{_A},\fh)}\right)^{1/2}.
\end{align}
In the case under consideration, from \eqref{Kder} it follows that
the quantity $\cV_{_K}(\Gamma)$, the variation \eqref{cVK} of the
$\cB(\fH_{_A})$-valued function $K$ along $\Gamma$, is explicitly
written as
\begin{equation}
\label{Gadmis1} \cV_{_K}(\Gamma)=\int_\Gamma
|d\mu|\,\|\fb(\mu)\fc(\mu).
\end{equation}
Together with \eqref{BGEN} and \eqref{CGEN}, this yields the bound
\begin{equation}
\label{Gadmis2} \cV_{_K}(\Gamma)=\int_\Gamma
|d\mu|\,\|\fb(\mu)\fc(\mu)\|\leq
\|B_{_\Gamma}\|_{\sE_{_{D_\Gamma}}}\|C_{_\Gamma}\|_{\sE_{_{D_\Gamma}}}.
\end{equation}

It is convenient to combine Hypothesis \ref{Hypo1} with our further assumptions
in the form of one more hypothesis.

\begin{hypothesis}
\label{Hypo2}
Assume Hypothesis \ref{Hypo1}. Assume in addition that there exist piecewise smooth Jordan contours
$\Gamma^-$ and/or $\Gamma^+$ with the end points $\alpha$ and $\beta$ such that
$\Gamma^\fs\subset\cD^\fs\cup\Delta$\, and
\begin{equation}
\label{BCED2d}
\|B_{_{\Gamma}}\|_{\sE_{_{D_{\Gamma}}}}\|C_{_{\Gamma}}\|_{\sE_{_{D_\Gamma}}}<\frac{d(\Gamma)^2}{4}\quad
\text{with}\quad \Gamma=\Gamma^\fs\quad\text{for both}\quad \fs=\pm1.
\end{equation}
\end{hypothesis}
\noindent In the following, the curves $\Gamma^\fs$, $\fs=\pm 1$, referred to in
Hypothesis \ref{Hypo2}, are called \emph{admissible} contours.

Under Hypothesis \ref{Hypo2} both deformed Riccati equations
\eqref{RicAG} and \eqref{RicDG} have their respective bounded
solutions $X_{_\Gamma}$ and $Y_{_\Gamma}$. Also the integral equation \eqref{EqMain} has
the unique solution $Z^\fs_{_A}$, $\fs=\pm1$, referred to in Theorem \ref{ThSolv1} and
this solution is directly related to the operator $X_{_\Gamma}$ associated with $\Gamma\subset\cD^\fs$.

\begin{theorem}
\label{ThFin} Assume  Hypothesis \ref{Hypo2}. Let $\Gamma=\Gamma^\fs$, $\fs=\pm1$,
be a Jordan contour from this hypothesis and set $d=d(\Gamma)$. Then the deformed
operator Riccati equations \eqref{RicAG} and \eqref{RicDG} have the
respective solutions $X_{_\Gamma}\in\cB(\fH_{_A},\fH_{_{D_\Gamma}})$
and $Y_\Gamma\in\cB(\fH_{_{D_\Gamma}},\fH_{_A})$ with the following properties:
\begin{itemize}
\item for $B_{_\Gamma}\neq 0$ the operator
$X_{_\Gamma}$ is
a unique solution to \eqref{RicAG} in the ball
\begin{equation}
\label{QEst2G}
\left\{T\in\cB(\fH_{_A},\fH_{_{D_\Gamma}})\,\big|\,\,\,
\|T\|<\|B_{_\Gamma}\|_{_{\sE_{D_\Gamma}}}^{-1}
\left(d-\sqrt{\|B_{_\Gamma}\|_{_{\sE_{D_\Gamma}}}\,\|C_{_\Gamma}\|_{_{\sE_{D_\Gamma}}}}\right)\right\}.
\end{equation}
The solution $X_{_\Gamma}$ has a finite $\sE_{_{D_\Gamma}}$--norm satisfying
the bound
\begin{equation}
\label{QEst1G}
\|X_{_\Gamma}\|_{\sE_{_{D_\Gamma}}}\leq
 \frac{1}{\|B_{_\Gamma}\|_{_{E_{D_\Gamma}}}}\,
\left(\frac{d}{2}-\sqrt{\frac{d^2}{4}-\|B_{_\Gamma}\|_{_{\sE_{D_\Gamma}}}\,\|C_{_\Gamma}\|_{_{\sE_{D_\Gamma}}}}\right).
\end{equation}
\item for $C_{_\Gamma}\neq 0$ the operator
$Y_{_\Gamma}$ is
a unique solution to \eqref{RicDG} in the ball
\begin{equation}
\label{QEst2GY}
\left\{T\in\cB(\fH_{_{D_\Gamma}},\fH_{_A})\,\big|\,\,\,
\|T\|<\|C_{_\Gamma}\|_{_{\sE_{D_\Gamma}}}^{-1}
\left(d-\sqrt{\|B_{_\Gamma}\|_{_{\sE_{D_\Gamma}}}\,\|C_{_\Gamma}\|_{_{\sE_{D_\Gamma}}}}\right)\right\}.
\end{equation}
The solution $Y_{_\Gamma}$ has a finite $\sE_{_{D_\Gamma}}$--norm satisfying
the bound
\begin{equation}
\label{QEst1GY}
\|Y_{_\Gamma}\|_{\sE_{_{D_\Gamma}}}\leq
 \frac{1}{\|C_{_\Gamma}\|_{_{E_{D_\Gamma}}}}\,
\left(\frac{d}{2}-\sqrt{\frac{d^2}{4}-\|B_{_\Gamma}\|_{_{\sE_{D_\Gamma}}}\,
\|C_{_\Gamma}\|_{_{\sE_{D_\Gamma}}}}\right).
\end{equation}
\end{itemize}
Furthermore, the solution $Z_{_A}^\fs$ of \eqref{EqMain} referred to
in Theorem \ref{ThSolv1} reads as
\begin{equation}
\label{ZABX}
Z_{_A}^\fs=A+B_{_{\Gamma}}X_{_{\Gamma}},\quad \fs=\pm 1,
\end{equation}
and is independent of the Jordan contour
$\Gamma\subset\cD^\fs\cup\Delta$ satisfying the assumptions of Hypothesis
\ref{Hypo2}.
\end{theorem}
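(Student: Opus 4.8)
The plan is to reduce the whole statement to the normal-operator solvability theory of Section~\ref{SPrel}. After deformation the entry $D_{_\Gamma}$ is normal (Remark~\ref{DGnormal}), with spectrum $\sigma_{_{D_\Gamma}}=\overline{\Gamma}$ whose distance to $\sigma_{_A}$ equals $d=d(\Gamma)>0$ by \eqref{dGd}. I would first observe that the admissibility condition \eqref{BCED2d} is exactly condition \eqref{BCED2} of Corollary~\ref{CorN1} for the quadruple $A,\,D_{_\Gamma},\,B_{_\Gamma},\,C_{_\Gamma}$ with gap $d$; hence, via the elementary bounds \eqref{STEN}, it implies both \eqref{BCest} and \eqref{BCest1}. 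Applying Theorem~\ref{QsolvN} (legitimate since $A$ and $D_{_\Gamma}$ are both normal) for $B_{_\Gamma}\neq0$ produces the solution $X_{_\Gamma}$ of \eqref{RicAG}, and Theorem~\ref{QsolvN1} for $C_{_\Gamma}\neq0$ produces the solution $Y_{_\Gamma}$ of \eqref{RicDG}.

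Next I would upgrade the asymmetric estimates of Theorems~\ref{QsolvN} and \ref{QsolvN1} to the symmetric form \eqref{QEst2G}--\eqref{QEst1GY}, in which the $\sE_{_{D_\Gamma}}$-norm appears in both slots. For the norm bounds this follows by monotonicity: writing the right-hand sides of \eqref{QEst1} and \eqref{QEst4} in the rationalized forms \eqref{rhs1}, \eqref{rhs2}, one sees that replacing $\|B_{_\Gamma}\|$ (resp. $\|C_{_\Gamma}\|$) by the larger $\|B_{_\Gamma}\|_{\sE_{_{D_\Gamma}}}$ (resp. $\|C_{_\Gamma}\|_{\sE_{_{D_\Gamma}}}$) throughout only enlarges the bound, yielding \eqref{QEst1G} and \eqref{QEst1GY}. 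For uniqueness it is enough to check that the stated balls \eqref{QEst2G}, \eqref{QEst2GY} are contained in the balls \eqref{QEst2}, \eqref{QEst3} of the general theorems; comparing radii and clearing denominators, this reduces to $\sqrt{\|B_{_\Gamma}\|_{\sE_{_{D_\Gamma}}}\|C_{_\Gamma}\|_{\sE_{_{D_\Gamma}}}}<d/2$, i.e. to \eqref{BCED2d} itself, so uniqueness transfers to the smaller balls a fortiori.

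The heart of the proof is the identity \eqref{ZABX}. Put $\widehat{Z}:=A+B_{_\Gamma}X_{_\Gamma}$. The first task is to justify the integral representation \eqref{XZ} for $X_{_\Gamma}$, which requires $\sigma_{_{\widehat{Z}}}\cap\overline{\Gamma}=\emptyset$. This I would obtain from the estimate $\|B_{_\Gamma}X_{_\Gamma}\|\le\|B_{_\Gamma}\|_{\sE_{_{D_\Gamma}}}\|X_{_\Gamma}\|_{\sE_{_{D_\Gamma}}}\le\frac{d}{2}-\sqrt{\frac{d^2}{4}-\|B_{_\Gamma}\|_{\sE_{_{D_\Gamma}}}\|C_{_\Gamma}\|_{\sE_{_{D_\Gamma}}}}<\frac{d}{2}$, where the middle inequality uses \eqref{QEst1G}; since $A$ is self-adjoint, $\sigma_{_{\widehat{Z}}}$ lies within distance $<d/2$ of $\sigma_{_A}$, hence at distance $\ge d/2>0$ from $\overline{\Gamma}=\sigma_{_{D_\Gamma}}$. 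With \eqref{XZ} applicable to the deformed data, I would write $X_{_\Gamma}=\int_{\overline{\Gamma}}\sE_{_{D_\Gamma}}(d\mu)\,C_{_\Gamma}(\widehat{Z}-\mu)^{-1}$, multiply on the left by $B_{_\Gamma}$, and use \eqref{BEC} and \eqref{Kder} (so that $B_{_\Gamma}\sE_{_{D_\Gamma}}(d\mu)C_{_\Gamma}=K'(\mu)\,d\mu$) to arrive at $B_{_\Gamma}X_{_\Gamma}=\int_{\Gamma}d\mu\,K'(\mu)(\widehat{Z}-\mu)^{-1}$. This identifies $\widehat{Z}$ as a solution of the fixed-point equation \eqref{EqMain}.

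Finally I would identify $\widehat{Z}$ with $Z_{_A}^\fs$ and deduce contour-independence. Since $\|\widehat{Z}-A\|=\|B_{_\Gamma}X_{_\Gamma}\|<d/2$, while $\sqrt{\cV_{_K}(\Gamma)}\le\sqrt{\|B_{_\Gamma}\|_{\sE_{_{D_\Gamma}}}\|C_{_\Gamma}\|_{\sE_{_{D_\Gamma}}}}<d/2$ by \eqref{Gadmis2} and \eqref{BCED2d}, we get $\|\widehat{Z}-A\|<d(\Gamma)-\sqrt{\cV_{_K}(\Gamma)}$, so $\widehat{Z}$ is of the form \eqref{ZAT} with remainder inside the uniqueness ball of Theorem~\ref{ThSolv1}; hence $\widehat{Z}=Z_{_A}^\fs$, which is precisely \eqref{ZABX}. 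Contour-independence is then immediate from Lemma~\ref{HuniqueCor}: for fixed $\fs$ the solution $Z_{_A}^\fs$ is common to all admissible contours $\Gamma^\fs$, and therefore so is $A+B_{_\Gamma}X_{_\Gamma}$. I expect the main obstacle to be exactly the spectral-disjointness verification $\sigma_{_{\widehat{Z}}}\cap\overline{\Gamma}=\emptyset$, since it is what legitimizes the representation \eqref{XZ} and hence the entire identification; the remaining steps amount to norm bookkeeping with the $\sE_{_{D_\Gamma}}$-norm.
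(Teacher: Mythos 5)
Correct, and essentially the paper's own argument: existence of $X_{_\Gamma}$ and $Y_{_\Gamma}$ from Theorems \ref{QsolvN} and \ref{QsolvN1} via \eqref{STEN}, the identity \eqref{ZABX} obtained from the integral representation \eqref{XZ}/\eqref{Xeq} combined with \eqref{BEC}, \eqref{Kder} and the bound $\|B_{_\Gamma}X_{_\Gamma}\|<d/2$ (which secures the spectral disjointness of $A+B_{_\Gamma}X_{_\Gamma}$ and $D_{_\Gamma}$), identification with $Z_{_A}^\fs$ through the uniqueness part of Theorem \ref{ThSolv1}, and contour-independence from Lemma \ref{HuniqueCor}. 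You additionally spell out, correctly, two points the paper's proof leaves implicit: the monotonicity argument that converts the mixed-norm balls and bounds of Theorems \ref{QsolvN} and \ref{QsolvN1} into the pure $\sE_{_{D_\Gamma}}$-norm statements \eqref{QEst2G}--\eqref{QEst1GY}, and the verification (via \eqref{Gadmis2} and \eqref{BCED2d}) that $A+B_{_\Gamma}X_{_\Gamma}$ actually falls inside the uniqueness ball of Theorem \ref{ThSolv1}.
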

\begin{proof}
The hypothesis includes the bound \eqref{BCED2d} which by
\eqref{STEN} implies both \eqref{BCest} and \eqref{BCest1}. Then
the statements concerning the solutions $X_{_\Gamma}$ and $Y_{_\Gamma}$
to the Riccati equations \eqref{RicAG} and \eqref{RicDG} follow from
Theorem \ref{QsolvN} and \eqref{QsolvN1}, respectively.

In the case under consideration, the transformed Riccati equation \eqref{Xeq} for
$X_{_\Gamma}$  is as follows:
\begin{equation}
\label{XeqG}
X_{_\Gamma}=\int_\Gamma \sE_{_{D_\Gamma}}(d\mu)C_{_\Gamma}(A+B_{_\Gamma}X_{_\Gamma}-\mu)^{-1}.
\end{equation}
Notice that by \eqref{QEst1G} we have
\begin{equation}
\label{BXbound}
\|B_{_\Gamma}X_{_\Gamma}\|\leq\|B\|_{_{\sE_{D_\Gamma}}}\|X\|_{_{\sE_{D_\Gamma}}}<d(\Gamma)/2.
\end{equation}
Taking to account that $A$ is a self-adjoint operator, this implies
\begin{equation}
\label{resBXbound} \|(A+B_{_\Gamma}X_{_\Gamma}-\mu)^{-1}\|\leq
\frac{1}{\dist(\mu,\sigma_{_A})-\|B_{_\Gamma}X_{_\Gamma}\|}<
\frac{2}{d(\Gamma)} \quad\text{for any}\,\, \mu\in\Gamma,
\end{equation}
which means that the integral on the right-hand side of
\eqref{XeqG} is well defined.  From \eqref{XeqG} one concludes
that  $Z_{_{A,\Gamma}}:=A+B_{_\Gamma}X_{_\Gamma}$
satisfies the equation
\begin{equation}
\label{ZGeq} Z_{_{A,\Gamma}}=A-\int_\Gamma d\mu\,
B_{_\Gamma}\sE_{_{D_\Gamma}}(d\mu)C_{_\Gamma}(Z_{_{A,\Gamma}}-\mu)^{-1}.
\end{equation}
In view of \eqref{BEC}, \eqref{Kbcg}, and \eqref{Kder} this equation is merely
the equation \eqref{EqMain}. Applying Theorem \ref{ThSolv1} then yields
$Z_{_A}^\fs=Z_{_{A,\Gamma}}$ which is just \eqref{ZABX}. The independence of
$Z_{_A}^\fs$ on contours
$\Gamma\subset\cD^\fs\cup\Delta$ satisfying the assumptions of Hypothesis
\ref{Hypo2} is proven by Lemma \ref{HuniqueCor}.
\end{proof}

\begin{corollary}
\label{CorDiag} Under Hypothesis \ref{Hypo2} the partially deformed
block operator matrix $L_{_\Gamma}$ defined by
\eqref{LG}--\eqref{BG} for
$\Gamma=\Gamma^\fs\subset\cD^\fs\cup\Delta$, $\fs=\pm1$, admits the block
diagonalization \eqref{Ldiag}  in terms of the unique solutions
$X_{_\Gamma}$ and $Y_{_\Gamma}$ referred to in Theorem \ref{ThFin}.
In particular,
\begin{equation}
\label{LdiagG}
L_{_\Gamma}=(I+Q_{_\Gamma})\left(\begin{array}{cc}
Z_{_{A}}^\fs & 0 \\
0 & Z_{_{D,\Gamma}}
\end{array}\right)(I+Q_{_\Gamma})^{-1},
\end{equation}
where $Z_{_{A}}^\fs$ is the operator \eqref{ZABX}, $Q_{_\Gamma}$ is given by
\begin{equation}
\label{QZDG}
Q_{_\Gamma}=\left(\begin{array}{cc}
0 & Y_{_\Gamma} \\
X_{_\Gamma} & 0
\end{array}\right),\quad\text{and}\quad Z_{D,\Gamma}=D_{_\Gamma}+C_{_\Gamma}Y_{_\Gamma}.
\end{equation}
\end{corollary}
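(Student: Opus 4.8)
The plan is to reduce the statement to two things already in hand: the existence and identification of the deformed solutions $X_{_\Gamma}$, $Y_{_\Gamma}$ from Theorem \ref{ThFin}, and the general algebraic diagonalization identity \eqref{Ldiag}, which holds for any pair of solutions to \eqref{RicA}\,/\,\eqref{RicD} satisfying the invertibility condition \eqref{XYn1}. Since $X_{_\Gamma}$ and $Y_{_\Gamma}$ solve the deformed Riccati equations \eqref{RicAG} and \eqref{RicDG} by Theorem \ref{ThFin}, the only genuinely new point to check is that \eqref{XYn1} is in force for this deformed pair; once that is done, \eqref{LdiagG} is simply the first equality in \eqref{Ldiag} written for $L_{_\Gamma}$, with the diagonal entries matched against \eqref{Za} and \eqref{Zd}.

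First I would invoke Corollary \ref{CorN1} in the deformed setting. The entry $A$ is self-adjoint, hence normal, and by Remark \ref{DGnormal} the deformed entry $D_{_\Gamma}$ is normal with spectrum $\overline{\Gamma}$; moreover \eqref{dGd} gives $d(\Gamma)=\dist(\sigma_{_A},\sigma_{_{D_\Gamma}})$. The bound \eqref{BCED2d} postulated in Hypothesis \ref{Hypo2} is precisely the smallness condition \eqref{BCED2} with $d$, $B$, $C$, $\sE_{_D}$ replaced by $d(\Gamma)$, $B_{_\Gamma}$, $C_{_\Gamma}$, $\sE_{_{D_\Gamma}}$. Corollary \ref{CorN1} therefore applies verbatim and yields, in the form of \eqref{XYl1},
\[
\|X_{_\Gamma}\|_{_{\sE_{_{D_\Gamma}}}}\,\|Y_{_\Gamma}\|_{_{\sE_{_{D_\Gamma}}}}<1 .
\]
Combining this with the elementary estimate \eqref{STEN}, which gives $\|X_{_\Gamma}\|\leq\|X_{_\Gamma}\|_{_{\sE_{_{D_\Gamma}}}}$ and $\|Y_{_\Gamma}\|\leq\|Y_{_\Gamma}\|_{_{\sE_{_{D_\Gamma}}}}$, I would conclude exactly as in Remark \ref{Rem1} that $\|X_{_\Gamma}Y_{_\Gamma}\|<1$ and $\|Y_{_\Gamma}X_{_\Gamma}\|<1$. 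In particular $1\not\in\spec(X_{_\Gamma}Y_{_\Gamma})$ and $1\not\in\spec(Y_{_\Gamma}X_{_\Gamma})$, so condition \eqref{XYn1} holds for the deformed pair and the operator $I+Q_{_\Gamma}$, with $Q_{_\Gamma}$ as in \eqref{QZDG}, is boundedly invertible.

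With \eqref{XYn1} secured, the general similarity identity \eqref{Ldiag}, now read for $L_{_\Gamma}$ in place of $L$, delivers the block diagonalization in either of its two forms. It then remains only to identify the diagonal entries: by \eqref{Za} the upper-left entry is $A+B_{_\Gamma}X_{_\Gamma}$, which equals $Z_{_A}^\fs$ by \eqref{ZABX} of Theorem \ref{ThFin}, while by \eqref{Zd} the lower-right entry is $D_{_\Gamma}+C_{_\Gamma}Y_{_\Gamma}=Z_{_{D,\Gamma}}$ as defined in \eqref{QZDG}; this produces \eqref{LdiagG}. I do not expect a real obstacle here, since the analytic work was carried out in Theorem \ref{ThFin}. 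The one step deserving mild care is the transfer of the $\sE_{_{D_\Gamma}}$-norm estimate of Hypothesis \ref{Hypo2} into a strict operator-norm contraction, i.e. the passage from \eqref{BCED2d} to \eqref{XYn1} — but this is exactly the content of Corollary \ref{CorN1} together with Remark \ref{Rem1}.
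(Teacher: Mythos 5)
Your proposal is correct and follows essentially the same route as the paper's own (very terse) proof: invoke Corollary \ref{CorN1} under the deformed smallness condition \eqref{BCED2d} to get $\|X_{_\Gamma}\|_{_{\sE_{_{D_\Gamma}}}}\|Y_{_\Gamma}\|_{_{\sE_{_{D_\Gamma}}}}<1$, then conclude via Remark \ref{Rem1} (using \eqref{STEN}) that \eqref{XYn1} holds and \eqref{Ldiag} applies to $L_{_\Gamma}$. Your explicit verification of the hypotheses of Corollary \ref{CorN1} in the deformed setting (normality of $A$ and $D_{_\Gamma}$, positivity of $d(\Gamma)$) and the identification of the diagonal entries via \eqref{Za}, \eqref{Zd}, and \eqref{ZABX} simply spell out details the paper leaves implicit.
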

\begin{proof}
By the hypothesis, the condition \eqref{BCED2d} holds. Then the statement is
proven by applying first Corollary \ref{CorN1} and then Remark \ref{Rem1}.
\end{proof}

\begin{remark}
It is worth noting that, because of \eqref{EDGchi}, the solution
$X_{_{\Gamma^\fs}}$, $\fs=\pm1$, represents an operator from $\fH_{_A}$ to
$\fH_{_{D,\Gamma}}=L^2(\Gamma\to\fh)$ whose action is given by
\begin{equation}
\label{Xx} {(X_{_{\Gamma^\fs}}f_{_A})(\lambda)=\fx^\fs(\lambda)f_{_A},
\quad  f_{_A}\in \fH_{_A},\,\, \lambda\in\Gamma^\fs},\,\, \fs=\pm1,
\end{equation}
where the $\cB(\fH_A,\fh)$-valued function $\fx^\fs$ of the complex
variable $\lambda\in\cD^\fs\setminus\sigma_{Z^\fs_A}$ reads
\begin{equation}
\label{fxl}
\fx^\fs(\lambda):=\fc(\lambda)(Z^\fs_{_A}-\lambda)^{-1}, \quad \lambda\in\cD^\fs\setminus\sigma_{Z^\fs_A}.
\end{equation}
Similarly, the operator $Y_{_{\Gamma}}$, for $\Gamma=\Gamma^\fs$ with fixed $\fs=\pm1$, may be presented as
\begin{equation}
\label{Yyl}
Y_{_{\Gamma}}f_{_{D,\Gamma}}=\int_{\Gamma}d\mu\, \fy^\fs(\mu) f_{_{D,\Gamma}}(\mu),
\end{equation}
where $f_{_{D,\Gamma}}\in\fH_{_{D,\Gamma}}$ and $\fy^\fs:\Gamma\to\cB(\fh,\fH_A)$ is
the operator-valued function given by
\begin{equation}
\label{fyl}
\fy^\fs(\lambda)=-(\widetilde{Z}_A^\fs-\lambda)^{-1}\fb(\lambda),\quad
\lambda\in\cD^\fs\setminus\sigma_{\widetilde{Z}^\fs_A}\,\,
\bigl(=\cD^\fs\setminus\sigma_{Z^\fs_A}\bigr),
\end{equation}
with $\widetilde{Z}_A^\fs=A-Y_{_\Gamma}C_{_\Gamma}$.
By \eqref{ZZa} the operators $\widetilde{Z}_A^\fs$ and $Z_{_A}^\fs$ are similar to each other,
\begin{equation}
\label{ZZsim}
\widetilde{Z}_{_A}^\fs=(I-Y_{_\Gamma}X_{_\Gamma})Z_{_A}^\fs(I-Y_{_\Gamma}X_{_\Gamma})^{-1}.
\end{equation}
This means that, like $Z_{_A}^\fs$,
the operator $\widetilde{Z}_A^\fs$ does not depend on a Jordan contour
$\Gamma\subset\cD^\fs\cup\Delta$ satisfying the assumptions of
Hypothesis \ref{Hypo2}. The functions $\fx^\fs$ and $\fy^\fs$ work
simultaneously for all the piecewise Jordan contours $\Gamma^\fs$ with
$\Omega(\Gamma^\fs)\subset\cD^\fs$ such that
$\sigma_{Z^\fs_A}\subset\Omega(\Gamma^\fs)$.
\end{remark}

\begin{remark}
By \eqref{LdiagG} the spectrum of the operator $L_{_\Gamma}$ is nothing but the
union of the spectra of $Z_{_{A}}^\fs$ and $Z_{_{D,\Gamma}}$:
\begin{equation}
\label{2comp}
\sigma_{_{L_{_\Gamma}}}=\sigma_{_{Z_A^\fs}}\cup\sigma_{_{Z_{_{D,\Gamma}}}}, \qquad \Gamma=\Gamma^\fs, \,\, \fs=\pm1.
\end{equation}
Under the hypothesis of Theorem \ref{ThFin} we have the bound
\eqref{BXbound} for the product $B_{_{\Gamma}}X_{_{\Gamma}}$. Similarly, by \eqref{QEst1GY}
one obtains
\begin{equation}
\label{CYbound}
\|C_{_\Gamma}Y_{_\Gamma}\|\leq
\|C_{_\Gamma}\|_{_{\sE_{D_\Gamma}}}\|Y_{_\Gamma}\|_{_{\sE_{D_\Gamma}}}<d(\Gamma)/2.
\end{equation}
Since the operator $A$ is self-adjoint and $D_{_\Gamma}$ is normal, from
\eqref{BXbound} and \eqref{CYbound} it respectively follows that
$\sigma_{_{Z_A^\fs}}\subset\cO_{_{d(\Gamma)/2}}(\sigma_{_A})$ and
$\sigma_{_{Z_{_{D,\Gamma}}}}\subset\cO_{_{d(\Gamma)/2}}(\sigma_{_{D_\Gamma}})$.
Hence, the spectra $\sigma_{_{Z_A^\fs}}$ and $\sigma_{_{Z_{_{D,\Gamma}}}}$
are disjoint,
\begin{equation}
\dist(\sigma_{_{Z_A^\fs}},\sigma_{_{Z_{_{D,\Gamma}}}})>0.
\end{equation}
Recall that the set $\sigma_{_{Z_A^\fs}}$ depends on $l$ but does not
depend on (the form of) the contour $\Gamma^\fs$ satisfying \eqref{OcVKd} (see
Theorem \ref{ThFin}; cf. Lemma \ref{HuniqueCor}).
\end{remark}

\end{document}